\newtheorem{thm}{Theorem}[section]
\newtheorem{lema}[thm]{Lemma}
\newtheorem{prop}[thm]{Proposition}
\theoremstyle{definition}
\newtheorem{defn}[thm]{Definition}
\theoremstyle{remark}
\newtheorem{rem}[thm]{Remark}
\numberwithin{equation}{section}
\newcommand{\A}{\mathcal A}
\newcommand{\R}{\mathbb R}
\newcommand{\N}{\mathbb N}
\newcommand{\s}{\mathbf{s}}
\newcommand{\p}{\mathbf{p}}
\newcommand{\K}{\mathcal K}
\newcommand{\D}{\mathcal{D}}
\def\supp{\mathop{\text{\normalfont supp}}}
\def\diver{\mathop{\text{\normalfont div}}}
\begin{document}
	\title[Shape optimization problems]{Shape optimization problems involving nonlocal and nonlinear operators.}
	
	\author[Ignacio Ceresa Dussel]{Ignacio Ceresa Dussel}
	
	\address{Instituto de C\'alculo, CONICET\\
		Departamento de Matem\'atica, FCEN - Universidad de Buenos Aires\\
		Ciudad Universitaria, 0+$\infty$ building, C1428EGA, Av. Cantilo s/n\\
		Buenos Aires, Argentina}
	
	\begin{abstract}
	In this research, we investigate a general shape optimization problem in which the state equation is expressed using a nonlocal and nonlinear operator.
	
	We prove the existence of a minimum point for a functional $F$ defined on the family of all 'quasi-open' subsets of a bounded open set $\Omega$ in $\mathbb{R}^n$. This is ensured under the condition that $F$ demonstrates decreasing behavior concerning set inclusion and is lower semicontinuous with respect to a suitable topology associated with the fractional $p$-Laplacian under Dirichlet boundary conditions.
	
	Moreover, we study the asymptotic behavior of the solutions when $s\to1$ and extend this result to the anisotropic case.
	
	\end{abstract}
	
	\subjclass[2020]{ 35R11, 49Q99 }
	
	
	\keywords{Shape optimization, fractional Laplacian, $\Gamma$-convergence, Anisotropic and Sobolev spaces}

	\maketitle

\section{Introduction}
In this study, we will address shape optimization problems. These type of problems have been extensively considered in the literature. Only to mention some references, we refer the reader to the books Bucur and Buttazzo \cite{BUTTAZZO-BUCUR}, Allaire \cite{ALLAIRE-02} and Sokolowski and Zolésio \cite{SOKOLOWSKI-ZOLESIO}.

 In their most general form, these problems involve a \emph{cost functional} $F$ and a class of \emph{admissible domains} $\A$. The objective is to solve the minimization problem stated as
\begin{equation}\label{problem}
	\min_{A\in\A}F(A).
\end{equation}
Issues of this nature naturally emerge in structural optimization, particularly when one wants to determine the optimal design of a system governed by partial differential equations.

In many of the existing literature sources, the cost functional $F$ is typically formulated in terms of a function $u_A$, which serves as the solution of a state equation to be computed over $A$ in integral form. Additionally, these problems often include constraints on the Lipschitz constant of $\partial A$. However, such constraints have sometimes been viewed as unnatural in practical applications. Consequently, in their research \cite{BUTTAZZO-DALMASO-93}, the authors introduced the family $\A$ of admissible domains as all \emph{quasi open} subsets of $\Omega$ with the sole constraint of volume, $|A| = c$. With regards to the function $F$, they posit that it decreases concerning set inclusion and is lower semicontinuous with respect to a suitable topology on $\A$. To be more explicit, the relationship of $F$ with a set $A$ was as follows, \emph{ a sequence $\{A_k\}_{k\in \N}$ converges to $A$ if for every $f\in H^{-1}(\Omega)$ the solutions of the Dirichlet problems 
$$
-\Delta u_k=f \text{ in }A_k,\quad u_k=0 \in \partial A_k,
$$ converge as $k\to \infty$ to the solution of the corresponding problem in $A$.}
Therefore, the existence problem for the "linear case" was solved.

On one hand, in \cite{BONDER-RITTORTO-18}, the authors study the \emph{non-local}
linear problem. The problem was considered with the aim of minimizing, taking into account the non-local Dirichlet problems
$$
(-\Delta)^su=f \text{ in }A_k,\quad u_k=0 \in \partial A_k,
$$ 
where $-(\Delta)^s$ is the \emph{fractional Laplacian} and $f$ belong in a suitable space. 

On the other hand, in \cite{FUSCO-19}, the authors proof similar results when the partial differential equation is the local non linear $p$-Laplacian
$$
-\Delta_pu:=\diver(|\nabla u|^{p-2}\nabla u).
$$
Even more, they prove a minimax characterization of the second eigenvalue of the $p$-Laplacian operator on $p$-quasi open sets.

In this study, our initial focus will be on the natural subsequent step, which involves minimizing problems similar to \ref{problem}, where the relation between the cost functional and the domain is the non-local and non-linear Dirichlet problem
$$
(-\Delta_p)^su=f \text{ in }A_k,\quad u_k=0 \in \partial A_k,
$$ 
where the operator is the \emph{fractional $p$-laplacian}
$$
(-\Delta_p)^su:=\frac{1-s}{K(n,p)}p.v\int_{\R^n}\frac{|u(x)-u(y)|^{p-2}(u(x)-u(y))}{|x-y|^{n+sp}}\,dy.
$$ 
 Keep in mind that the fractional $p$-Laplacian is the non-local equivalent of the $p$-Laplacian. Based on \cite{BBM-01,BONDER-SALORT-20,BONDER-RITTORTO-18}, we examine the transition from non-local to local operators, specifically studying the asymptotic behavior as $s$ approaches 1.

Finally, based on \cite{CERESABONDER23,CHAKER-23}, we will introduce the anisotropic counterparts of the fractional $p$-laplacian and the $p$-laplacian, namely the \emph{pseudo $p$-laplacian}
$$
-\widetilde{\Delta}_\p u:=\sum_{i=1}^{n}(|u_{x_i}|^{p_i-2}u_{x_i})_{x_i},
$$

and the \emph{ fractional pseudo $p$-laplacian}
\begin{equation}\label{fractiona pseudo p}
(-\widetilde{\Delta}_\p)^\s u:=\sum_{i=1}^{n}(1-s_i)p.v\int_{\R}\frac{|u(x+he_i)-u(x)|^{p_i-2}(u(x+he_i)-u(x))}{|h|^{1+s_ip_i}}\,dh.
\end{equation}
In this expression, $\s=(s_1,\cdots,s_n)$ represents the differential fractional parameters and $\p=(p_1,\cdots,p_n)$ represent different integral parameters.

The paper is structured as follows: In Section \ref{sec_preliminares}, we present all preliminaries and some useful properties. In Section \ref{sec_teo1}, we will state and prove the main result, which is based on the existence of a minimizer. In Section \ref{sec_asint} discusses the asymptotic behavior of the solution as $s$ approaches 1. Finally, in Section \ref{sec_anis} prove the existence of a minimizer when the operator exhibits anisotropic characteristics.
%
 
\section{Preliminaries}\label{sec_preliminares}
In this section, we will introduce definitions and properties pertaining to the functional framework that will be employed.

Let $0 < s < 1$ and $1 < p < \infty$ be parameters, the fractional Sobolev space is defined as
\[
W^{s, p}(\R^n) = \left\{ u \in L^p(\R^n) : \frac{u(x) - u(y)}{|x - y|^{\frac{n}{p} + s}} \in L^p(\R^n\times\R^n) \right\}.
\]
This space is endowed with the norm
$$
\|u\|_{W^{s, p}(\R^n)} = \left( \|u\|_{L^p(\R^n)}^p + [u]_{s, p} \right)^{\frac{1}{p}},
$$
where
$$
[u]_{s,p} = \left( \iint_{\R^n \times \R^n} \frac{|u(x) - u(y)|^p}{|x - y|^{n+sp}} \, dxdy \right)^{\frac{1}{p}
}.
$$
With the above norm, \(W^{s, p}(\R^n)\) is a reflexive Banach space. 
	
If we consider a bounded open set of $\R^n$, namely $\Omega$, the common practice is to take into account the closure of $C^\infty_c(\R^n)$ with respect to $\|.\|_{W^{s,p}}$. Another definition is given by
	$$
	W_0^{s, p}(\Omega)=\{u\in W^{s, p}(\R^n)\, |\, u=0 \text{ in }  \R^n\setminus\Omega \}.
	$$
	It is well known that these two definitions agree when, for example, $\partial \Omega$ is Lipschitz. Therefore, since the boundary of $\Omega$ is not of our interest, we will assume that the set $\Omega$ has a Lipschitz boundary.

When $s=1$, we recover the well-known Sobolev space $W^{1,p}_0(\Omega)$, with the classical norm $\|\nabla u\|_{L^p(\Omega)}$.
\begin{defn}
	Given $0 < s \leq 1 \leq p < \infty$, the (topological) dual space of $W^{s,p}_0(\Omega)$ will be denoted by $W^{-s,p'}(\Omega)$, and $\langle\cdot,\cdot\rangle$ is the duality map between $W^{-s,p'}(\Omega)$ and $W^{s,p}_0(\Omega)$.
	
\end{defn}

\subsection{Capacity}
 In \cite{WARMA-15} the author define the $(s,p)$-Capacity of a set $A\subset\Omega$. We will recall some useful definitions and theorems about this fractional capacity, and refer the interested reader to see \cite{WARMA-15} and their references.

\begin{defn}
	Let $\Omega \subset \mathbb{R}^n$ be an open set. Given $A \subset \Omega$, for any $0 < s \leq 1<p<\infty$, we define the $(s,p)$-Capacity of $A$ as
	$$
	\text{Cap}(s,p)(A) = \inf \left\{ [u]_{s,p} : u \in W^{s,p}_0(\Omega), u \geq 0, A \subset \{u \geq 1\}^\circ \right\}.
	$$
\end{defn}
If a property $P(x)$ holds for every $x \in E$ except for those in a set $Z \subset E$ with $\text{Cap}(s,p)(Z) = 0$, we say that $P(x)$ holds $(s,p)$-quasi everywhere on $E$ (abbreviated as $q.e.$ on $E$).
\begin{defn}
A function $u : \Omega \rightarrow \R$ is said to be quasi-continuous (q.c.) if for every $\varepsilon > 0$, there exists a relatively open set $G \subset \Omega$ such that $\text{Cap}(s,p) (G) < \varepsilon$ and $u|_{\Omega \backslash G}$ is continuous.
\end{defn}
\begin{thm}[\cite{WARMA-15},Theorem 3.7]
	For every $u \in W^{s,p}_0(\Omega)$, there exists a unique quasi-continuous function $\tilde{u} : \Omega \rightarrow \mathbb{R}$ such that $\tilde{u} = u$ a.e. on $\Omega$.
\end{thm}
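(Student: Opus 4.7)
The plan is to follow the classical construction of the quasi-continuous representative, adapted to the fractional setting. The central ingredient is a \emph{capacitary weak-type estimate} of the form
\[
\text{Cap}(s,p)\bigl(\{|v|>\lambda\}\bigr)\le C\lambda^{-p}[v]_{s,p}^p \qquad \text{for all } v\in W^{s,p}_0(\Omega),\ \lambda>0,
\]
which follows directly from the definition of $\text{Cap}(s,p)$ by testing with $\lambda^{-1}|v|$ (after a standard truncation/approximation so that the competitor is admissible and nonnegative, and $\{|v|\ge\lambda\}^\circ$ contains the given open set). I would prove this capacitary inequality first, since it is what converts norm convergence into quasi-uniform convergence.

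Next, I would use the density of $C^\infty_c(\Omega)$ in $W^{s,p}_0(\Omega)$ (valid because $\partial\Omega$ is Lipschitz) to pick a sequence $u_n\in C^\infty_c(\Omega)$ with $\|u_n-u\|_{W^{s,p}}\to 0$. Passing to a subsequence, I may assume $[u_{n_{k+1}}-u_{n_k}]_{s,p}^p\le 4^{-kp}$. Setting
\[
G_k=\{x\in\Omega:|u_{n_{k+1}}(x)-u_{n_k}(x)|>2^{-k}\}, \qquad H_j=\bigcup_{k\ge j}G_k,
\]
the capacitary estimate gives $\text{Cap}(s,p)(G_k)\le C\,2^{-kp}$, hence $\text{Cap}(s,p)(H_j)\to 0$ as $j\to\infty$ by subadditivity of capacity. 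On $\Omega\setminus H_j$ the series $\sum_k(u_{n_{k+1}}-u_{n_k})$ converges uniformly, so $u_{n_k}$ converges uniformly on $\Omega\setminus H_j$ to a continuous function; let $\tilde u$ denote the pointwise limit where it exists and zero otherwise. Since each $u_{n_k}$ is continuous and $\text{Cap}(s,p)(H_j)<\varepsilon$ for $j$ large, $\tilde u$ is quasi-continuous by definition. Fubini / the a.e.\ convergence of a further subsequence gives $\tilde u=u$ a.e.\ in $\Omega$.

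For uniqueness, suppose $v_1,v_2$ are two quasi-continuous representatives of $u$; then $w=v_1-v_2$ is quasi-continuous with $w=0$ a.e.\ in $\Omega$. I would argue that $\{w\ne 0\}$ has capacity zero: for any $\varepsilon>0$ pick a relatively open $G$ with $\text{Cap}(s,p)(G)<\varepsilon$ and $w|_{\Omega\setminus G}$ continuous; on $\Omega\setminus G$ the set $\{w\ne 0\}$ is open in $\Omega\setminus G$ and has Lebesgue measure zero (since $w=0$ a.e.), hence is empty, so $\{w\ne 0\}\subset G$ and $\text{Cap}(s,p)(\{w\ne 0\})<\varepsilon$. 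Letting $\varepsilon\to 0$ yields $v_1=v_2$ quasi everywhere.

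The main obstacle is the capacitary weak-type inequality, because the definition of $\text{Cap}(s,p)$ requires competitors that are admissible for the \emph{open} superlevel set $\{u\ge 1\}^\circ$, so one cannot just plug in $\lambda^{-1}|v|$ without a small mollification/truncation argument ensuring admissibility and the correct scaling. Once that is in place, the rest of the argument is the standard Borel--Cantelli/quasi-uniform convergence machinery transplanted to the fractional Sobolev setting, and nothing in the construction uses locality of the norm.
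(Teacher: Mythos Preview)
The paper does not actually prove this statement: it is simply quoted from \cite{WARMA-15}, Theorem~3.7, with no argument given, so there is no ``paper's own proof'' to compare against. Your sketch is the standard construction of the quasi-continuous representative (capacitary weak-type estimate, fast Cauchy subsequence of smooth approximations, Borel--Cantelli on the superlevel sets, uniqueness via the quasi-continuity of the difference), and it is correct in outline; this is essentially the route taken in Warma's paper and in the classical local theory. One small point: in your uniqueness argument you should say $v_1=v_2$ \emph{quasi everywhere}, which is the correct notion of uniqueness here (two quasi-continuous representatives can differ on a set of zero capacity), matching the paper's convention that properties hold ``q.e.''.
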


\begin{defn}
	$A\subset\Omega$ is considered an \emph{quasi open} set if for any $\varepsilon>0$ there exist $A^\varepsilon$ open set of $\Omega$ such that  Cap$(s,p)(A\triangle A^\varepsilon)\leq\varepsilon$, where $\triangle$ is the symmetric difference of sets.
\end{defn}
We represent the class of all $(s,p)$-quasi open subsets of $\Omega$ as $\A_{s,p}(\Omega)$, and for every $A\in \A_{s,p}(\Omega)$ we define the space
$$
W_0^{s, p}(A)=\{u\in W^{s, p}(\R^n)\, |\, u=0 \text{ q.e in }  \R^n\setminus A \}.
$$

$W_0^{s, p}(A)$ with norm inherited from $W^{s,p}_0(\Omega)$ is a closed subspace,therefore, is a reflexive Banach space. If $A$ is open the previous definition of $W^{s,p}_0(A)$ is equivalent to the usual one.

Following, we will provide the definition of a weak solution for \emph{$p$-Laplace} and
\emph{fractional $p$-Laplace} operator.
\begin{defn}
	Let \( \mathcal{B} \subset \mathbb{R}^n \) be a  bounded  set, given $s$ and $p$ such that \( 0 < s \leq 1 < p < \infty \), \(p'\) the Lebesgue conjugate of \(p\) and \( f \in L^{p'}(\mathcal{B}) \). We say that \( u \in W^{s,p}_0(\mathcal{B}) \) is a weak solution of 
	\begin{equation}\label{weak-solution}
	\begin{cases}
	(-\Delta_p)^s u = f \text{ in } \mathcal{B} \\
	u = 0 \text{ in } \mathbb{R}^n \setminus \mathcal{B}
	\end{cases}
	\end{equation} 
	if $u$ verifies 
	\[
	\langle (-\Delta_p)^s u , v\rangle = \int_\mathcal{B} f v \, dx,
	\]
	where 
	\begin{equation}
	\langle (-\Delta_p)^s u , v\rangle:=\begin{cases}\displaystyle\iint_{\mathcal{B} \times \mathcal{B}} \frac{|u(x) - u(y)|^{p-2}(u(x)-u(y))(v(x)-v(y))}{|x - y|^{n+sp}} \, dxdy,&\text{ if }s<1,\\
	\displaystyle\int_{\mathcal{B} } |\nabla u|^{p-2}uv \, dx&\text{ if }s=1,
	\end{cases}
	\end{equation}
	
	for every \( v \in W^{s,p}_0(\mathcal{B}) \).
\end{defn}
\begin{rem}
	If we consider the functional 
$ J_{s,p,\mathcal{B}} :W^{s,p}_0(\mathcal{B})\to  \R$ is defined as
	\begin{equation}
		J_{s,p,\mathcal{B}} (u)=
		\begin{cases} \displaystyle\iint_{\mathcal{B} \times \mathcal{B}} \frac{|u(x) - u(y)|^p}{|x - y|^{n+sp}} \, dx dy &\text{ if }s<1\\
			\displaystyle\int_{\mathcal{B}}|\nabla u|^p \,dx&\text{ if }s=1.
		\end{cases}	
	\end{equation}
	 $J_{s,p,\mathcal{B}}$ is a continuous functional with continuous Gateaux differential $J_{s,p,\mathcal{B}}'\colon W^{-s,p'}(\mathcal{B})\to W^{s,p}_0(\mathcal{B})$, given by
	$$
	\langle J_{s,p,\mathcal{B}}'(u),v\rangle=
	\begin{cases} \displaystyle\iint_{\mathcal{B} \times \mathcal{B}} \frac{|u(x) - u(y)|^{p-2}(u(x)-u(y))(v(x)-v(y))}{|x - y|^{n+sp}} \, dx dy &\text{ if }s<1,\\
		\displaystyle\int_{\mathcal{B}}|\nabla u|^{p-2}uv \,dx&\text{ if }s=1.
	\end{cases}	
	$$
\end{rem}	
	The monotonicity of the operators ensures, as in the standard case of an open set, the existence of weak solutions to \eqref{weak-solution}. Hence, given $A \in \A_{s,p}(\Omega)$, we denote by $u_A^s \in W_0^{s,p}(A)$ the unique weak solution to
\begin{equation}\label{def_u^a}
(-\Delta_p)^s v = 1 \text{ in } A, \quad v = 0 \text{ q.e. in } \mathbb{R}^n\setminus A.
\end{equation}
We will employ the following concept of set convergence, which is the fractional version of the $\gamma$-convergence of sets defined in \cite{DALMASO-93}:
\begin{defn}\label{def gamma}
	Let $\{A_k\}_{k\in\mathbb{N}} \subset \A_{s,p}(\Omega)$ and $A \in \A_{s,p}(\Omega)$. We say that $A_k \xrightarrow{\gamma} A$ if $u^s_{A_k} \to u_A^s$ in $L^p(\Omega)$.
\end{defn}
Now, we will show some properties related to $u_A^s$. 
\begin{lema}\label{lemma2}
	For any $A\in \A_{s,p}$ and $ u_A^s$ defined in \ref{def_u^a} the following holds:
	$$
	u_A^s=\max_{w\in W^{s,p}_0(\Omega)}\{ w\leq 0 \text{ in } \R^n\setminus A\text{ and } (-\Delta_p)^sw\leq 1 \text{ on } \Omega\}.
	$$
\end{lema}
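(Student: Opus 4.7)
The plan is to split the proof into two steps: first, verify that $u_A^s$ itself belongs to the set
\[
\mathcal{S} := \{w \in W^{s,p}_0(\Omega) : w \leq 0 \text{ in } \R^n\setminus A \text{ and } (-\Delta_p)^s w \leq 1 \text{ on } \Omega\},
\]
and second, prove that $w \leq u_A^s$ q.e.\ for every $w \in \mathcal{S}$. Throughout I interpret $(-\Delta_p)^s w \leq 1$ on $\Omega$ in the natural weak sense, i.e.\ $\langle (-\Delta_p)^s w, \varphi\rangle \leq \int_\Omega \varphi\,dx$ for every $\varphi \in W^{s,p}_0(\Omega)$ with $\varphi \geq 0$ q.e.

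For the \emph{membership} step, the condition $u_A^s \leq 0$ q.e.\ on $\R^n\setminus A$ is immediate from $u_A^s \in W^{s,p}_0(A)$. To check $(-\Delta_p)^s u_A^s \leq 1$ on $\Omega$, I would reinterpret $u_A^s$ as the solution of an obstacle problem: among $v$ in the convex set $K := \{v \in W^{s,p}_0(\Omega) : v \leq 0 \text{ q.e.\ on } \Omega \setminus A\}$, $u_A^s$ minimizes the energy $E(v) := \frac{1}{p} J_{s,p,\Omega}(v) - \int_\Omega v\,dx$. The key point is that for any $v \in K$ its positive part $v^+$ lies in $W^{s,p}_0(A)$ and satisfies $J_{s,p,\Omega}(v^+) \leq J_{s,p,\Omega}(v)$ (by the elementary estimate $|v^+(x)-v^+(y)| \leq |v(x)-v(y)|$) together with $\int v^+\,dx \geq \int v\,dx$; hence $E(v^+) \leq E(v)$, and the minimum of $E$ over $K$ coincides with its minimum over $W^{s,p}_0(A)$, which is attained at $u_A^s$. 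The variational inequality associated with this obstacle problem, applied to perturbations $v = u_A^s + \varphi$ with $\varphi \leq 0$ q.e.\ on $\Omega\setminus A$, then yields $\langle (-\Delta_p)^s u_A^s, \varphi\rangle \geq \int_\Omega \varphi\,dx$, which is the desired inequality after replacing $\varphi$ by $-\psi$ with $\psi \geq 0$.

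For the \emph{maximality} step, given $w \in \mathcal{S}$ I would test with $\varphi := (w - u_A^s)^+$. Since $w \leq 0$ and $u_A^s = 0$ q.e.\ on $\R^n \setminus A$, one has $\varphi = 0$ q.e.\ there, hence $\varphi \in W^{s,p}_0(A)$. Combining
\[
\langle (-\Delta_p)^s u_A^s, \varphi \rangle = \int_A \varphi\,dx = \int_\Omega \varphi\,dx
\]
with the hypothesis $\langle (-\Delta_p)^s w, \varphi\rangle \leq \int_\Omega \varphi\,dx$ gives $\langle (-\Delta_p)^s w - (-\Delta_p)^s u_A^s, \varphi\rangle \leq 0$. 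A pointwise case analysis based on the signs of $w(x)-u_A^s(x)$ and $w(y)-u_A^s(y)$ shows that the integrand in this bracket is everywhere nonnegative, so the integral vanishes; strict monotonicity of $t \mapsto |t|^{p-2}t$ then forces $(w - u_A^s)^+ = 0$ q.e., using that $(w - u_A^s)^+$ vanishes outside $\Omega$ to rule out a nonzero constant as a candidate limit.

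The \textbf{main obstacle} I expect is the last monotonicity step: converting the pointwise nonnegativity of the integrand into the genuine conclusion $(w - u_A^s)^+ \equiv 0$ rather than merely constant on each level set, which forces a careful partition of $\R^n\times\R^n$ according to where $w-u_A^s$ is positive or negative. The local case $s=1$ proceeds along the same outline, the nonlocal bilinear form being replaced by $\int |\nabla u|^{p-2}\nabla u\cdot\nabla\varphi\,dx$ and the monotonicity reducing at once to the vector inequality $(|a|^{p-2}a - |b|^{p-2}b)\cdot(a-b)\geq 0$ applied on $\{w > u_A^s\}$ via $\nabla(w-u_A^s)^+ = \chi_{\{w > u_A^s\}}\nabla(w-u_A^s)$.
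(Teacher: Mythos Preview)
Your proof is correct and follows essentially the same route as the paper: both recast $u_A^s$ as the minimizer of the energy $E$ over the convex set $K_A=\{v\in W^{s,p}_0(\Omega):v\le 0\text{ q.e.\ on }\R^n\setminus A\}$, identify this minimizer with the one over $W^{s,p}_0(A)$ (you via $E(v^+)\le E(v)$, the paper via testing the variational inequality with $w_A^+$), and then use the variational inequality to deduce $(-\Delta_p)^s u_A^s\le 1$ on $\Omega$. For the maximality step the paper simply invokes the comparison principle as a black box, so the monotonicity case analysis you flag as the ``main obstacle'' is in fact a standard ingredient that the paper takes for granted; your inline argument is the usual one and goes through without difficulty.
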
	
\begin{proof}	
	Let us define the convex set 
	$$	K_A=\{w\in W^{s,p}_0(\Omega)\colon w\leq 0 \text{ in } \R^n\setminus A	\},
	$$
	and let $w_A$ be the unique minimizer of $	I_s\colon K_A\to \R$ defined by
	$$
	I_s(w)=J_{s,p}(w)-\int_{\Omega} w\,dx.
	$$
	By direct calculus we have the following variational inequality
	\begin{equation}\label{variational_equation}
	\langle J_{s,p}'(w_A) , v-w_A\rangle 
	\geq \int_{\Omega} (v-w_A)\,dx\quad\forall v\in K_A.
	\end{equation}
	If we take $w_A^+$ as test function in \eqref{variational_equation}, we obtain that
	\begin{align*}
		0\leq\int_{\Omega}w_A^-\,dx \leq \langle J_{s,p}'(w_A) , w_A^-\rangle\leq
		-\iint_{(A\times A)^c}\frac{|w_A^-(x)-w_A^-(y)|^p}{|x-y|^{n+sp}}\,dx dy.
	\end{align*}
	From the preceding inequality, we deduce that $w_A^-=0$. Consequently, given that $w_A$ belongs to $K_A$, it follows that $w_A$ is in $W^{s,p}_0(A)$. Since $u_A^s$ represents the minimum of $I_s$ in $W^{s,p}_0(A)$, we can conclude that $w_A=u_A^s$.
	
	Next, given $v \in W^{s,p}_0(\Omega)$ such that $v \geq 0$, it follows that $-v \in K_A$. Hence, applying \eqref{variational_equation}
	\begin{align}\label{eq6}
		\langle J_{s,p}'( u_A^s) ,-v-  u_A^s\rangle=\langle J_{s,p}'( u_A^s) ,-v\rangle+\langle J_{s,p}'( u_A^s) ,-  u_A^s\rangle \geq\int_{\Omega} -v- u_A^s\,dx. 
	\end{align}
	Considering that 
	$$\langle J_{s,p}'( u_A^s) ,-  u_A^s\rangle = \displaystyle\int_{\Omega} - u_A^s\,dx,$$ the  inequality \eqref{eq6} can be expressed as
	$$
	\langle J_{s,p}'( u_A^s) ,v\rangle \leq\int_{\Omega} v\,dx.
	$$ 
	Since $v\in W^{s,p}_0(\Omega)$ is non negative but otherwise arbitrary, we get that $(-\Delta_p)^s u_A^s\leq 1$ in $\Omega$.
	
	Finally, if $w\leq 0$ in $\R^n \setminus A$ and $(-\Delta_p)^sw\leq 1$ in $\Omega$, then 
	$$
	(-\Delta_p)^sw\leq (-\Delta_p)^s u_A^s \text{ in }A\quad\text{ and } w\leq  u_A^s \text{ in }\R^n\setminus A.
	$$
	By comparison principle, $w\leq  u_A^s$ in $\R^n$, as we aim to demonstrate to conclude the lemma.
\end{proof}
To conclude this section, we will prove two lemmas that are crucial to our objective.
\begin{lema}\label{lemaA}
	Let $\{A_k\}_{k\in\N}$ be a sequence of \(quasi-open\) subsets of $\Omega$ such that $u_{A_k}^s$ converges weakly in $W^{s,p}_0(\Omega)$ to a function $u$, and let $\{v_k\}_{k\in \N}$ a sequence in $W^{s,p}_0(\Omega)$ such that $v_k=0$ q.e on $\Omega\setminus A_k$. Assume that $v_k\to v$ weakly in $W^{s,p}_0(\Omega)$. Then $v=0$ q.e. on $\{u=0\}$.
\end{lema}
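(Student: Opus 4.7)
The plan is to combine Mazur's lemma with the strong maximum principle for the fractional $p$-torsion problem, in the spirit of the arguments for the classical Laplacian (Buttazzo--Dal Maso) and the linear fractional case (Bonder--Rittorto). First I would reduce to the case $v_k \geq 0$ by replacing $v_k$ with $|v_k|$: the pointwise inequality $\bigl||v_k(x)| - |v_k(y)|\bigr| \leq |v_k(x) - v_k(y)|$ yields $[|v_k|]_{s,p} \leq [v_k]_{s,p}$, so $|v_k|$ stays bounded in $W^{s,p}_0(\Omega)$ and, after extracting a subsequence, converges weakly to $|v|$ (the weak limit is forced by $L^p$-compactness and uniqueness of the limit). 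Next, by the strong maximum principle applied to \eqref{def_u^a}, since $(-\Delta_p)^s u_{A_k}^s = 1 > 0$ in $A_k$ and $u_{A_k}^s \geq 0$, one has $u_{A_k}^s > 0$ q.e. on $A_k$; hence $\{u_{A_k}^s = 0\} = \mathbb{R}^n \setminus A_k$ q.e. and the hypothesis rewrites as $v_k = 0$ q.e. on $\{u_{A_k}^s = 0\}$.

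Now I would apply Mazur's lemma in the product space $W^{s,p}_0(\Omega) \times W^{s,p}_0(\Omega)$ to the sequence $\{(u_{A_k}^s, v_k)\}$ to obtain convex combinations
$U_m := \sum_{k \geq m} \lambda^m_k u_{A_k}^s$ and $V_m := \sum_{k \geq m} \lambda^m_k v_k$
(with $\lambda^m_k \geq 0$ and $\sum_k \lambda^m_k = 1$) such that $U_m \to u$ and $V_m \to v$ strongly in $W^{s,p}_0(\Omega)$, and, up to a further subsequence, q.e. Since $u_{A_k}^s, v_k \geq 0$ and $\lambda^m_k \geq 0$, the zero set $\{U_m = 0\}$ coincides q.e. with $\bigcap_{k : \lambda^m_k > 0} \{u_{A_k}^s = 0\}$; by the previous step this is contained q.e. in $\bigcap_{k : \lambda^m_k > 0} \{v_k = 0\} \subset \{V_m = 0\}$, giving the finite-$m$ support inclusion.

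The main obstacle is to pass from the finite-$m$ inclusion $\{U_m = 0\} \subset \{V_m = 0\}$ q.e. to the desired limiting inclusion $\{u = 0\} \subset \{v = 0\}$ q.e.: although $U_m \to u$ q.e., one may have $U_m(x) > 0$ on much of $\{u = 0\}$, so the finite-$m$ inclusion does not constrain the limit directly. To bridge this gap, I would fix $\delta > 0$ and use the Lipschitz truncation $\Phi_\delta(t) := \min(t/\delta, 1)$, writing
$V_m = V_m\,\Phi_\delta(U_m) + V_m\bigl(1 - \Phi_\delta(U_m)\bigr)$,
so that the second summand is supported q.e. in $\{0 < U_m < \delta\}$ by the inclusion above, while $V_m\,\Phi_\delta(U_m) \leq V_m \wedge (U_m/\delta)$ vanishes q.e. where $U_m = 0$. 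Passing to the strong $L^p$-limit in $m$ (using strong convergence of $V_m$ and $U_m$ together with the Lipschitz character of $\Phi_\delta$) and then sending $\delta \to 0$, so that $\Phi_\delta(u) \to \mathbf{1}_{\{u > 0\}}$ monotonically, combined with q.e.\ convergence of the quasi-continuous representatives, should force $v \cdot \mathbf{1}_{\{u = 0\}} = 0$ q.e., completing the proof.
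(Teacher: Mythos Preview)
Your approach diverges from the paper's and the final step has a genuine gap. The finite-$m$ inclusion $\{U_m=0\}\subset\{V_m=0\}$ q.e., together with strong (hence, along a subsequence, q.e.) convergence $U_m\to u$ and $V_m\to v$, does \emph{not} force $\{u=0\}\subset\{v=0\}$ q.e. For q.e.\ $x\in\{u=0\}$ you only know $U_m(x)\to 0$, which is perfectly compatible with $U_m(x)>0$ for every $m$; the finite-$m$ inclusion then says nothing about $V_m(x)$, hence nothing about $v(x)$. The truncation $\Phi_\delta$ does not repair this: after passing to the limit in $m$ the identity $v=v\,\Phi_\delta(u)+v\bigl(1-\Phi_\delta(u)\bigr)$ is tautological, and the finite-$m$ support information on the second summand (supported in $\{0<U_m<\delta\}$) is lost because those sets move with $m$ and have no reason to be small or to approximate $\{0<u<\delta\}$. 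Your stated bound $V_m\,\Phi_\delta(U_m)\le V_m\wedge(U_m/\delta)$ is also false without an a priori $L^\infty$ bound on $V_m$. What the argument lacks is a \emph{quantitative} pointwise link of the form $|w_k|\le C\,u_{A_k}^s$; bare support inclusion is too weak to pass to the limit.

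The paper obtains precisely such a pointwise bound, by a different mechanism. It $\Gamma$-converges (along a subsequence) the functionals $\Phi_k(w)=J_{s,p,A_k}(w)$, extended by $+\infty$ off $W^{s,p}_0(A_k)$, to some $\Phi$ on $L^p(\Omega)$, observes that $v\in\mathrm{Dom}(\Phi)$, and uses density of $\mathrm{Dom}(\partial\Phi)$ in $\mathrm{Dom}(\Phi)$ to reduce to functions $\omega$ admitting $f\in\partial\Phi(\omega)$. Approximating $f$ by bounded $f_\varepsilon$ and solving $(-\Delta_p)^s\omega_{k,\varepsilon}=f_\varepsilon$ in $A_k$, the comparison principle yields $|\omega_{k,\varepsilon}|\le c_\varepsilon\,u_{A_k}^s$ pointwise; passing to the weak limit gives $|\omega_\varepsilon|\le c_\varepsilon\,u$, hence $\omega_\varepsilon=0$ on $\{u=0\}$, and one concludes by letting $\varepsilon\to 0$. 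The comparison principle is the engine here, and it enters not via the strong maximum principle you invoked (which only produces the qualitative support statement) but via a genuine pointwise domination that survives the limit.
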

Before presenting the proof, we will revisit the definition of $\Gamma$-convergence of functionals and some properties of the \emph{subdifferential set} of a convex function. See \cite{DALMASO-93} and \cite{BREZIS-LIBRO,MOTREANU14} respectively.
\begin{defn}
	Let $(X,d)$ be a metric space and $F_n,F\colon X\to \overline{\R}$, $n\in \N$. $F_n$ $\Gamma$-converge to $F$ if
	\begin{itemize}\label{liminfeq}
		\item For every $x\in X$ and for all sequence $\{x_n\}_{n\in\N}$ such that $x_n\to x$ hold that
		\begin{equation*}
			F(x)\leq\liminf_{n\to\infty}F_n(x_n).
		\end{equation*}
		\item For every $x\in X$, there exist a sequence $\{y_n\}_{n\in\N}$ such that $y_n\to y$ and
		\begin{equation*}\label{limsupeq}
			F(x)\geq\limsup_{n\to\infty}F_n(y_n).
		\end{equation*}
	\end{itemize}
\end{defn}
\begin{defn}
	Let $X$ a Banach space and $\phi\colon X\to\R \cup \{+\infty\}$ a convex function that is not identically $+\infty$. The \emph{effective domain} of the function $\phi$ is defined by
	$$
	Dom(\phi)=\{x\in X:\phi(x)<\infty\}.
	$$
	For $x\in Dom(\phi)$ the \emph{subdifferential} of  $\phi$ at $x$ is de set $\partial \phi(x)\subset X^*$ defined by
	\[
	\partial\phi(x)=\{x^*\in X^*\colon \phi(x+h)-\phi(x)\geq \langle x^*,h\rangle\text{ for all }h\in X\}
	\] 
\end{defn}
The following remark, we collect all properties that we will need.
\begin{rem}\label{rem1}Let us denote $\Gamma_0(X)$ the cone of convex, lower semicontinuous functions on a Banach space $X$ that are not identically $\infty$.
	\begin{itemize}
		\item	For every $x\in Dom(\partial\phi):=\{x\in X\colon \partial\phi(x)\not=\emptyset\}$ the set $\partial\phi(x)$  is nonempty, convex and $w^*-$closed. Moreover, if $\phi$ is convex and continuous at $x_0\in Dom(\phi)$, then $\partial \phi(x_0)$ is $w*-compact$.
		
		\item If $X$ is a reflexive Banach space, and $\phi\in\Gamma_0(X)$, then $Dom(\partial\phi)$ is dense in $Dom(\phi)$.
		
		\item	Finally, if $\phi^*(x^*)=\sup_{y\in X}\{\langle x^*,y \rangle-\phi(y)\}$ then
		$$x^*\in \partial \phi(x)\iff\phi(x)+\phi^*(x^*)=\langle x^*,x \rangle
		.$$ 
	\end{itemize}
\end{rem}

\begin{proof}[Proof of Lemma \ref{lemaA}]
	Let us define the functionals
	$$\Phi_k(w)=
	\begin{cases}
	J_{s,p,A_k}(w)  &\text{ if }w\in W^{s,p}_0(A_k)\\
	+\infty\qquad &\text{ otherwise}
	\end{cases}
	$$
	on the space $L^p(\Omega).$ From basic fact about the $\Gamma-$convergence there is a subsequence that $\Gamma$-converge to a functional $\Phi$ in $L^p(\Omega).$ The domain of $\Phi$, $Dom(\Phi)$, is contained in $L^p(\Omega)$ and as $\{v_k\}_{k\in\N}$ is a bounded sequence in $W^{s,p}_0(\Omega)$, we have
	$$
	\Phi(v)\leq\liminf_{k\to\infty}\Phi_k(v_k)\leq \liminf_{k\to\infty} J_{s,p}(v_k)<\infty,
	$$
	hence $v\in  Dom\,(\Phi)$. By Remark \ref{rem1}, the $Dom(\partial\Phi)$ is dense in $Dom(\Phi)$. Hence,  we will show that for every $\omega\in Dom(\partial\Phi)$, we have $\omega=0$ on $\{u=0\}$.
	
	Let us fix $\omega \in Dom(\partial\Phi)$ and $f\in\partial\Phi(\omega)$, by Remark \ref{rem1} $\omega$ is a minimum point of 
	$$
	\Psi(z)=\Phi(z)-\langle f,z\rangle.
	$$
	Let $\omega_k$ be the minimum of the functional
	$$
	\Psi_k(z)=\Phi_k(z)-\langle  f,z\rangle.
	$$
	Since $\Psi_k$ $\Gamma$-converges to $\Psi$ and Theorem 7.23 of \cite{DALMASO-93}, we have that $\{\omega_k\}_{k\in \N}\rightharpoonup \omega$ in $W^{s,p}_0(\Omega)$. Hence, we can define for every $\varepsilon>0$ the functional $ f_\varepsilon$ such that $\| f_\varepsilon-f\|_{L^p}<\varepsilon$, and $\omega_{k,\varepsilon}$ be the solution of the problem
	$$\begin{cases}
	(-\Delta_p)^s\omega_{k,\varepsilon}= f_{\varepsilon}&\quad\text{ in }A_k,\\
	\omega_{k,\varepsilon}=0 &\quad\text{ in } \Omega\setminus A_k.
	\end{cases}
	$$
	Then, from the well known inequality 
	\begin{align}
		(|a|^{p-2}a-|b|^{p-2}b)(a-b)\geq \begin{cases}
			c_1(|a|+|b|)^{p-2}|a-b|^2&\text{ if }1<p<2\\
			c_2|a-b|^p&\text{ if }2\leq p,
		\end{cases}
	\end{align}	
	where $c_1$ and $c_2$ are constants, we obtain $[\omega_{k,\varepsilon}-\omega_k]_{s,p}\leq c\| f_{\varepsilon}- f\|_{L^p(\Omega)}\leq c\varepsilon$ and (up to a subsequence) $\{\omega_{k,\varepsilon}\}$ converges weakly in $W^{s,p}_0(\Omega)$ to a function $\omega_\varepsilon$. Therefore, by lower semicontinuity of the norm, we get $[
	\omega_\varepsilon-\omega]_{s,p}\leq c\varepsilon$. Setting $c_\varepsilon=\| f_\varepsilon\|_{L^\infty(\Omega)}$, by the maximum principle we get $|\omega_{k,\varepsilon}|\leq c_\varepsilon u_{A_k}$ a.e in $\Omega$ for every $k\in\N$. Thus $|\omega_\varepsilon|\leq c_\varepsilon u$ a.e on $\Omega$. This implies that $\omega_\varepsilon=0$ on $\{u=0\}$.
\end{proof}

\begin{lema}\label{lemmaB}
	Let $A$ and $\{A_k\}_{k\in \N}$, be a quasi-open subsets of $\Omega$ such that $u^s_{A_k}\rightharpoonup u$ in $W^{s,p}_0(\Omega)$, with $u\leq  u_A^s$ on $\Omega$.
	Let $A^{\varepsilon}=\{ u_A^s>\varepsilon\}$ and assume that $u^s_{A_k\cup A^\varepsilon}$ converges to some function $u_\varepsilon$ weakly in $W^{s,p}_0(\Omega)$. Then, $u_\varepsilon\leq  u_A^s$ q.e. on $\Omega$.
\end{lema}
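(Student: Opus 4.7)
The plan is to exploit the maximum characterization of $u_A^s$ provided by Lemma \ref{lemma2}, namely that $u_A^s$ equals the q.e.\ pointwise maximum over all $w \in W^{s,p}_0(\Omega)$ satisfying (a) $w \leq 0$ q.e.\ on $\R^n \setminus A$ and (b) $(-\Delta_p)^s w \leq 1$ on $\Omega$. Consequently, it suffices to verify that the weak limit $u_\varepsilon$ satisfies both (a) and (b), and the desired inequality $u_\varepsilon \leq u_A^s$ q.e.\ on $\Omega$ will follow immediately.

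To establish (b), I will pass the inequality $(-\Delta_p)^s u^s_{A_k \cup A^\varepsilon} \leq 1$ (valid on $\Omega$ for every $k$ by applying Lemma \ref{lemma2} to the set $A_k \cup A^\varepsilon$) to the weak limit. Since $J'_{s,p}$ is nonlinear and therefore not weakly continuous, I first need to upgrade the weak $W^{s,p}_0(\Omega)$-convergence to strong convergence. Testing the state equation for $u^s_{A_k \cup A^\varepsilon}$ against the solution itself yields the identity $J_{s,p}(u^s_{A_k \cup A^\varepsilon}) = \int u^s_{A_k \cup A^\varepsilon}$; the right-hand side converges by the Rellich--Kondrachov compact embedding $W^{s,p}_0(\Omega) \hookrightarrow L^p(\Omega)$, and combined with weak lower semicontinuity of $J_{s,p}$ and the $\Gamma$-convergence of the associated restricted functionals (as exploited in the proof of Lemma \ref{lemaA}) this delivers convergence of the $J_{s,p}$-energies. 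By the uniform convexity of $W^{s,p}_0(\Omega)$ the convergence becomes strong, and the continuity of $J'_{s,p}$ then yields (b) for $u_\varepsilon$.

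For (a), I plan to invoke Lemma \ref{lemaA}. The hypothesis $u \leq u_A^s$, combined with $u \geq 0$ (as a weak limit of nonnegative solutions) and the fact that $u_A^s = 0$ q.e.\ on $\R^n \setminus A$, forces $u = 0$ q.e.\ on $\R^n \setminus A$. Lemma \ref{lemaA} then asserts that any sequence $\{v_k\} \subset W^{s,p}_0(\Omega)$ with $v_k = 0$ q.e.\ on $\Omega \setminus A_k$ and $v_k \rightharpoonup v$ satisfies $v = 0$ q.e.\ on $\{u = 0\}$, in particular on $\R^n \setminus A$. The obstacle is that $u^s_{A_k \cup A^\varepsilon}$ does not fit this framework directly: it only vanishes q.e.\ outside $A_k \cup A^\varepsilon$, and may well be positive on $A^\varepsilon \setminus A_k$. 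My strategy will be to decompose $u^s_{A_k \cup A^\varepsilon} = P_k + R_k$ with $P_k \in W^{s,p}_0(A_k)$ and $R_k$ supported q.e.\ in $A^\varepsilon \subset A$; Lemma \ref{lemaA} applied to $P_k$ will then force its weak limit to vanish q.e.\ on $\R^n \setminus A$, while the weak limit of $R_k$ trivially vanishes there as well, closing the argument. I expect the main technical hurdle to be the rigorous construction of this decomposition, since naive truncation by characteristic functions does not preserve Sobolev regularity; a viable route is order-based truncation using the lattice operations $\min$ and $\max$ applied to $u^s_{A^\varepsilon}$ or $u_A^s$, exploiting that $W^{s,p}_0$ is closed under these operations.
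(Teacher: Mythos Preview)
Your overall architecture---verify that $u_\varepsilon$ satisfies both conditions (a) $u_\varepsilon\le 0$ q.e.\ on $\R^n\setminus A$ and (b) $(-\Delta_p)^s u_\varepsilon\le 1$ on $\Omega$, then invoke Lemma~\ref{lemma2}---is exactly the paper's. The execution, however, has gaps in both parts.

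\textbf{Part (b).} Your detour through \emph{strong} convergence is both unnecessary and incomplete. From the state equation you correctly get $J_{s,p}(u^s_{A_k\cup A^\varepsilon})=\int_\Omega u^s_{A_k\cup A^\varepsilon}\to\int_\Omega u_\varepsilon$, and weak lower semicontinuity gives $J_{s,p}(u_\varepsilon)\le\int_\Omega u_\varepsilon$. But the $\Gamma$-limit $\Phi$ of the restricted functionals $\Phi_k$ need not coincide with $J_{s,p}$ at $u_\varepsilon$; one only has $\Phi(u_\varepsilon)\ge J_{s,p}(u_\varepsilon)$, so convergence of minima yields $J_{s,p}(u^s_{A_k\cup A^\varepsilon})\to\Phi(u_\varepsilon)$, not $J_{s,p}(u_\varepsilon)$. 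Hence norm convergence is not established. The paper bypasses all this: the first half of Lemma~\ref{lemmaK} shows directly that $\K_s$ is closed under weak $W^{s,p}_0(\Omega)$-limits (the sequence $\eta_k(x,y)=|u_k(x)-u_k(y)|^{p-2}(u_k(x)-u_k(y))/|x-y|^{(n+sp)/p'}$ is bounded in $L^{p'}$ and converges a.e., hence weakly, which suffices to identify the limit operator). Since $u^s_{A_k\cup A^\varepsilon}\in\K_s$, this immediately gives $u_\varepsilon\in\K_s$, i.e.\ (b).

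\textbf{Part (a).} The additive decomposition $u^s_{A_k\cup A^\varepsilon}=P_k+R_k$ with $P_k\in W^{s,p}_0(A_k)$ and $R_k$ supported in $A^\varepsilon$ is never constructed, and the lattice operations you mention do not produce it: for instance $(u^s_{A_k\cup A^\varepsilon}-u^s_{A^\varepsilon})^+$ need not vanish on $A^\varepsilon\setminus A_k$. The paper avoids any decomposition. It builds a single Sobolev cutoff
\[
v_\varepsilon:=1-\tfrac{1}{\varepsilon}\min(u_A^s,\varepsilon)\in W^{s,p}_0(\Omega),
\]
which satisfies $0\le v_\varepsilon\le 1$, $v_\varepsilon=0$ on $A^\varepsilon$, and $v_\varepsilon=1$ on $\Omega\setminus A$. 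Then $w_k:=\min(v_\varepsilon,u^s_{A_k\cup A^\varepsilon})$ vanishes on $A^\varepsilon$ (because $v_\varepsilon=0$ there) and on $\Omega\setminus(A_k\cup A^\varepsilon)$ (because $u^s_{A_k\cup A^\varepsilon}=0$ there), hence $w_k\in W^{s,p}_0(A_k)$. Since $w_k\rightharpoonup\min(v_\varepsilon,u_\varepsilon)$, Lemma~\ref{lemaA} gives $\min(v_\varepsilon,u_\varepsilon)=0$ q.e.\ on $\{u=0\}\supset\Omega\setminus A$; but $v_\varepsilon=1$ there, so $u_\varepsilon=0$ q.e.\ on $\Omega\setminus A$, which is (a). This is the concrete ``order-based truncation'' you were looking for.
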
 
\begin{proof}
	For every $\varepsilon>0$ let us define $v_\varepsilon=1-\frac{1}{\varepsilon}\min( u_A^s,\varepsilon)$, then we have $v_\varepsilon\in W_0^{s,p}(\Omega)$, $0\leq v_\varepsilon\leq 1$ on $\Omega$, $v_\varepsilon=0$ on $A^\varepsilon$ and $v_\varepsilon=1$ on $\Omega\setminus A.$ 
	
	Now set $w_k=\min(v_\varepsilon,u_{A_k\cup A^\varepsilon})$, then $w_k=0$ on $A^\varepsilon$ and $w_k=0$ on $\Omega\setminus(A_k\cup A^\varepsilon)$, so that $w_k=0$ on $\Omega\setminus A_k$. Moreover, $w_k$ converges to $\min(v_\varepsilon,u_\varepsilon)$. By Lemma \ref{lemaA} we get that $\min(v_\varepsilon,u_\varepsilon)=0$ on $\{u=0\}$. Therefore, $\min(v_\varepsilon,u_\varepsilon)=0$ on $\Omega\setminus A$. Since $v_\varepsilon=1$ on $\Omega\setminus A$, we have that $u_\varepsilon=0$ on $\Omega\setminus A$. 
	
	Finally, since $(-\Delta_p)^s u_{A_k\cup A^\varepsilon}\leq 1$ on $\Omega$, we have that $(-\Delta_p)^s u_\varepsilon\leq 1$ on $\Omega$, from Lemma \ref{lemma2}  $u_\varepsilon\leq  u_A^s$ on $\Omega$, and the proof is completed.
\end{proof}

\section{Statement and proof of the main result}\label{sec_teo1}
In this section, based in \cite{BUTTAZZO-DALMASO-93,BONDER-RITTORTO-18} and \cite{FUSCO-19} we will state and prove the main result.
\begin{thm}\label{teo1}
	Given $0 < s < 1<p<\infty $, $\Omega \subset \mathbb{R}^n$ be open and bounded set. Let $F_s : \A_{s,p}(\Omega) \to \R$ be such that
	\begin{itemize}
		\item[$(H_1)$] $F_s$ is lower semicontinuous with respect to the $\gamma$-convergence; that is, $A_k \xrightarrow{\gamma} A$ implies $F_s(A) \leq \liminf_{k\to\infty} F_s(A_k)$.
		\item[$(H_2)$] $F_s$ is decreasing with respect to set inclusion; that is, $F_s(A) \geq F_s(B)$ whenever $A \subset B$.
	\end{itemize}
	 Then, for every $0 < c < |\Omega|$, problem
	\begin{equation}\label{P}
		\min\{F_s(A): A \in \A_{s,p}(\Omega), |A| \leq c\},
	\end{equation} has a solution.
\end{thm}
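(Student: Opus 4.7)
The plan follows the Buttazzo--Dal Maso strategy, adapted to the nonlocal nonlinear setting via Lemmas \ref{lemma2}, \ref{lemaA} and \ref{lemmaB}. First I would take a minimizing sequence $\{A_k\}_{k\in\N} \subset \A_{s,p}(\Omega)$ with $|A_k| \leq c$ and $F_s(A_k) \to \inf F_s$, and consider the associated torsion functions $u_k := u_{A_k}^s$ from \eqref{def_u^a}. Testing the equation with $u_k$ itself together with the fractional Poincar\'e inequality gives a uniform $W^{s,p}_0(\Omega)$--bound, so by reflexivity and the compact embedding into $L^p(\Omega)$, up to a subsequence $u_k \rightharpoonup u$ weakly in $W^{s,p}_0(\Omega)$, strongly in $L^p(\Omega)$, and pointwise $q.e.$ at the level of quasi-continuous representatives.

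Next I would set $A := \{u > 0\}$, which is quasi-open since $u$ is quasi-continuous. The volume constraint $|A| \leq c$ follows from Fatou applied to the chain $\chi_{\{u_k > 0\}} \leq \chi_{A_k}$. To obtain the comparison $u \leq u_A^s$ on $\Omega$, I would verify that $u$ belongs to the admissible class of Lemma \ref{lemma2}: the condition $u = 0$ $q.e.$ on $\R^n \setminus A$ holds by construction, while $(-\Delta_p)^s u \leq 1$ in $\Omega$ would be derived as follows. For every nonnegative $v \in W_0^{s,p}(\Omega)$ the function $-v$ lies in $K_{A_k}$, so the variational inequality \eqref{variational_equation} together with the identity $\langle J'_{s,p}(u_k), u_k\rangle = \int_\Omega u_k\,dx$ yields $\langle J'_{s,p}(u_k), v\rangle \leq \int_\Omega v\,dx$; one then passes to the limit in this nonlinear expression via a Minty-type monotonicity argument combined with the strong $L^p$ convergence of $u_k$.

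The decisive step is a filling-and-diagonalising argument on the enlarged sets $A_k \cup A^\varepsilon$, where $A^\varepsilon := \{u_A^s > \varepsilon\}$. Along a further subsequence $u^s_{A_k \cup A^\varepsilon} \rightharpoonup u_\varepsilon$ in $W^{s,p}_0(\Omega)$; Lemma \ref{lemmaB} then gives $u_\varepsilon \leq u_A^s$, while the comparison principle applied to $A^\varepsilon \subset A_k \cup A^\varepsilon$ gives $u^s_{A_k \cup A^\varepsilon} \geq u^s_{A^\varepsilon}$. As $\varepsilon \to 0^+$ the sets $A^\varepsilon$ exhaust $\{u_A^s > 0\}$ modulo capacity zero and $u^s_{A^\varepsilon} \to u_A^s$ in $L^p(\Omega)$, so sandwiching forces $u_\varepsilon = u_A^s$. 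A diagonal extraction then yields $\varepsilon_k \to 0$ with $A_k \cup A^{\varepsilon_k} \xrightarrow{\gamma} A$ in the sense of Definition \ref{def gamma}. Hypothesis $(H_2)$ combined with $A_k \subset A_k \cup A^{\varepsilon_k}$ gives $F_s(A_k \cup A^{\varepsilon_k}) \leq F_s(A_k)$, and hypothesis $(H_1)$ then yields
\[
F_s(A) \leq \liminf_{k\to\infty} F_s(A_k \cup A^{\varepsilon_k}) \leq \liminf_{k\to\infty} F_s(A_k) = \inf F_s,
\]
so $A$ is the desired minimizer. The main obstacle I expect is the first inequality $(-\Delta_p)^s u \leq 1$ in $\Omega$: because $J'_{s,p}$ is nonlinear, passing to the limit in $\langle J'_{s,p}(u_k), v\rangle \leq \int_\Omega v\,dx$ is not automatic from weak $W^{s,p}_0$-convergence of $u_k$, and requires either the Minty trick sketched above or an upgrade to strong convergence of $u_k$ in $W^{s,p}_0(\Omega)$.
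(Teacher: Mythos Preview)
Your direct approach is correct and reaches the goal, but it is organized differently from the paper. The paper does not take a minimizing sequence for $F_s$ directly; instead it introduces an auxiliary functional $G$ on the compact convex set $\K_s$ of \eqref{K_S} (the lower semicontinuous envelope of $m(w)=\inf\{F_s(A): u_A^s\le w\}$), proves the identity $G(u_A^s)=F_s(A)$, and then minimizes $G$ over $\{w\in\K_s:|\{w>0\}|\le c\}$ using the compactness of $\K_s$ (Lemma \ref{lemmaK}). The filling--diagonalising argument with the sets $A_k\cup A^\varepsilon$ appears in the paper too, but inside the verification of $G(u_A^s)=F_s(A)$ rather than applied to the minimizing sequence for $F_s$. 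Your route is the ``Proposition \ref{prop1}'' argument (used in the paper only for the asymptotic analysis) applied at fixed $s$: it is shorter for this particular theorem, while the paper's $G$--formulation is closer to the original Buttazzo--Dal Maso scheme and separates the set--function analysis from the existence step.

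Two points to tighten. First, your stated obstacle $(-\Delta_p)^s u\le 1$ is exactly what Lemma \ref{lemmaK} proves: the paper avoids Minty by observing that $\eta_k(x,y)=|u_k(x)-u_k(y)|^{p-2}(u_k(x)-u_k(y))/|x-y|^{(n+sp)/p'}$ is bounded in $L^{p'}(\R^n\times\R^n)$, hence weakly convergent, and identifies the weak limit via the a.e.\ convergence of $u_k$. You can simply invoke that lemma. Second, in the sandwich you write ``$u_\varepsilon=u_A^s$'' where you mean $u_\varepsilon\to u_A^s$ as $\varepsilon\to 0$; and the lower bound you need is not just ``$A^\varepsilon$ exhausts $\{u_A^s>0\}$'' but the concrete comparison $(u_A^s-\varepsilon)^+\le u_{A^\varepsilon}^s$, which is a nontrivial pointwise inequality for the fractional $p$-Laplacian proved in the paper by computing $(-\Delta_p)^s(u_A^s-\varepsilon)^+\le (-\Delta_p)^s u_A^s$ on $A^\varepsilon$ and applying the comparison principle. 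With these two inputs your argument closes.
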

Observe that there are numerous examples of functionals $F_s$ that satisfy the given hypotheses. For example, let $A\in \A_{s,p}(\Omega)$ be a admissible domain, and consider the eigenvalue problem:
$$
(-\Delta_\p)^su=\lambda^s u \text{ in }A\ \ ,u\in W^{s,p}_0(A).
$$
There, $\lambda^s\in\R$ is the eigenvalue parameter. It is well known, \cite{FRANZINA-14,BRASCO-PARINI-16,BRASCO-PARINI-16B} that there is a increasing discrete sequence  $\{\lambda_k^s(A)\}_{k\in\N}$ of eigenvalues. Therefore if we consider the application $A\to\lambda_k^s(A)$, the Theorem \ref{teo1} claims that for every $k \in \N$ and $0 < c < |\Omega|$, the minimum
\[
\min\{\lambda^s_{k}(A) : A \in \A_{s,p}(\Omega), |A| \leq c\}
\]
is achieved. 

Given the previous example, we can now proceed to the proof of Theorem \ref{teo1}.
\subsection{Proof of Theorem \ref{teo1}}
The sketch of argument is as follows, given $A\in \A_{s,p}$ we proof in Lemma \ref{lemma2} that $ u_A^s$ is the solution to
$$
\max\{w\in W^{s,p}_0(\Omega)\colon w\leq 0 \in \R^n\setminus A, (-\Delta_p)^sw\leq 1\}.
$$
Moreover $ u_A^s$ belong to 
\begin{equation}\label{K_S}
\K_s=\{w \in W_0^{s,p}(\Omega) : w \geq 0, (-\Delta_p)^s w \leq 1 \text{ in } \Omega\}
\end{equation}
 Then, we will define a functional $G$ on $\K_s$ satisfying
\begin{itemize}
	\item [$(G1)$]$G$ is decreasing on $\K_s$.
	\item [$(G2)$]$G$ is semicontinuous on $\K_s$ with the respect to the strong topology on $L^p(\Omega)$.
	\item [$(G3)$] $G( u_A^s)=F_s(A)$ for every $A\in \A_{s,p}$. 
\end{itemize}
to conclude that the problem 
$$
\min\{G(w):w\in \K_s,|\{w>0\}|<c\}
$$
has solution $w_0$. If we denote by $ A_0 = \{w_0 > 0\} $, then $ u \in A_0 $ is also a minimum point of $ G $ over the whole $ K_s $ subject to the condition $ |\{w > 0\}| \leq c $, and hence, $ A_0 $ is a minimizer for $ F_s $ in $ A_s(\Omega) $ subject to the condition $ |A| \leq c $.

First, we need to prove some properties about the set $\K_s$.
\begin{lema}\label{lemmaK}
	Given any sequence $\{u_k\}_{k\in\N}\subset \K_s$. Then, there exist $u\in K_s$ and a sub sequence $\{u_{k_j}\}_{j\in\N}$ such $u_{k_j}\to u$ in $L^p(\Omega)$. 
	
	Furthermore, for any $u, v$ belonging to $\K_s$, the maximum of $u$ and $v$, denoted as $\max\{u, v\}$, also belongs to $\K_s$.
\end{lema}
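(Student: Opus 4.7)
I will treat the compactness and max-closure parts separately.

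For the \emph{compactness}, my first step is to bound the seminorms $[u_k]_{s,p}$ uniformly. Testing the subsolution inequality $(-\Delta_p)^s u_k \leq 1$ against the admissible non-negative function $u_k$ itself yields $J_{s,p}(u_k) = \langle J'_{s,p}(u_k), u_k \rangle \leq \int_\Omega u_k$. H\"older's inequality combined with a fractional Poincaré inequality on the bounded set $\Omega$ then gives $\int_\Omega u_k \leq C\,[u_k]_{s,p}$, so $[u_k]_{s,p}^{p-1} \leq C$. By reflexivity of $W^{s,p}_0(\Omega)$ and the compact embedding $W^{s,p}_0(\Omega) \hookrightarrow L^p(\Omega)$, I extract a subsequence $u_{k_j} \rightharpoonup u$ weakly in $W^{s,p}_0(\Omega)$ and strongly (and a.e., after a further extraction) in $L^p(\Omega)$; in particular $u \geq 0$ and $u \in W^{s,p}_0(\Omega)$.

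The core technical step is to pass the subsolution property $(-\Delta_p)^s u_{k_j} \leq 1$ to the weak limit. My plan is a Minty-type monotonicity argument: for arbitrary non-negative $\phi \in W^{s,p}_0(\Omega)$ and $t > 0$, the monotonicity of $J'_{s,p}$ applied at $u + t\phi$ and $u_{k_j}$ gives
\[
\langle J'_{s,p}(u + t\phi),\, (u + t\phi) - u_{k_j} \rangle \;\geq\; \langle J'_{s,p}(u_{k_j}),\, (u + t\phi) - u_{k_j} \rangle.
\]
Passing $j \to \infty$, the left-hand side converges to $t\langle J'_{s,p}(u + t\phi), \phi \rangle$ by weak convergence of $u_{k_j}$. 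The right-hand side expands into $\langle J'_{s,p}(u_{k_j}), u + t\phi\rangle - J_{s,p}(u_{k_j})$, and I control it using (i) the subsolution bound $\langle J'_{s,p}(u_{k_j}), u + t\phi \rangle \leq \int_\Omega(u + t\phi)$, valid because $u + t\phi \geq 0$, and (ii) the weak lower semicontinuity $\liminf J_{s,p}(u_{k_j}) \geq J_{s,p}(u)$. After rearranging, dividing by $t$, and sending $t \to 0^+$ with the hemicontinuity of $J'_{s,p}$ at $u$, I obtain $\langle J'_{s,p}(u), \phi \rangle \leq \int_\Omega \phi$; since $\phi \geq 0$ was arbitrary, $u \in \K_s$.

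For the \emph{max-closure}, set $w = \max(u, v)$. Clearly $w \geq 0$. To verify $(-\Delta_p)^s w \leq 1$ I fix a non-negative test function $\phi \in W^{s,p}_0(\Omega)$ and split $\langle J'_{s,p}(w), \phi \rangle$ into the four integrals over the product regions of $\mathbb{R}^n \times \mathbb{R}^n$ determined by the signs of $u - v$ at $x$ and at $y$. On the two diagonal blocks the integrand coincides with that of $J'_{s,p}(u)$ or $J'_{s,p}(v)$, while on the off-diagonal blocks the elementary sandwich $v(x) - v(y) \leq w(x) - w(y) \leq u(x) - u(y)$ (on $\{u(x) \geq v(x),\, u(y) < v(y)\}$, and symmetrically on the other region) lets me dominate the mixed contributions by the $u$- and $v$-integrands. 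To realize this decomposition in terms of admissible test functions — the indicator $\mathbf{1}_{\{u \geq v\}}$ is not in $W^{s,p}_0(\Omega)$ — I replace it by a Lipschitz truncation $\chi_\varepsilon(u - v)$, so that $\phi\,\chi_\varepsilon(u - v)$ and $\phi\,(1 - \chi_\varepsilon(u - v))$ become non-negative test functions in the subsolution inequalities for $u$ and $v$ respectively; adding the two inequalities and passing to the limit $\varepsilon \to 0^+$ delivers $\langle J'_{s,p}(w), \phi \rangle \leq \int_\Omega \phi$.

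The \emph{main obstacle} I expect is the Minty step in the compactness part: because $\K_s$ is defined by an inequality rather than an equation and $J'_{s,p}$ is genuinely nonlinear and not weakly continuous, no off-the-shelf convergence result applies, and the passage to the limit must carefully combine the subsolution bound, the weak lower semicontinuity of $J_{s,p}$, and the hemicontinuity of $J'_{s,p}$; the analogous difficulty in the max-closure step is controlling the residual terms supported on $\{0 < u - v < \varepsilon\}$ as $\varepsilon \to 0^+$, where the non-local character of the operator prevents the clean cancellation available in the local case.
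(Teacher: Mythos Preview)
The Minty step in your compactness argument does not close. Monotonicity gives
\[
t\,\langle J'_{s,p}(u+t\phi),\phi\rangle \;=\; \lim_j\langle J'_{s,p}(u+t\phi),(u+t\phi)-u_{k_j}\rangle \ \ge\ \limsup_j\Big(\langle J'_{s,p}(u_{k_j}),u+t\phi\rangle - J_{s,p}(u_{k_j})\Big),
\]
while your controls (i) and (ii) are both \emph{upper} bounds for the bracketed quantity. You thus have $A\ge B$ and $B\le C$, which says nothing about $A$ versus $C$; no rearranging turns this into $\langle J'_{s,p}(u),\phi\rangle\le\int_\Omega\phi$. This is not an accident of bookkeeping: in the classical Minty device the choice $v=u+t\phi$ with $t>0$ delivers the \emph{super}solution direction, and to obtain the subsolution direction one must take $v=u-t\phi$, at which point $u_{k_j}-v$ has no sign and the subsolution bound on $J'_{s,p}(u_{k_j})$ is no longer available. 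The three ingredients you list are genuinely insufficient for this passage.

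The paper instead exploits a nonlocal feature that you have in hand but do not use. From the compact embedding you already have $u_{k_j}\to u$ a.e., and therefore the flux
\[
\eta_{k_j}(x,y)=\frac{|u_{k_j}(x)-u_{k_j}(y)|^{p-2}\bigl(u_{k_j}(x)-u_{k_j}(y)\bigr)}{|x-y|^{(n+sp)/p'}}
\]
converges a.e.\ on $\R^n\times\R^n$. Being also bounded in $L^{p'}(\R^n\times\R^n)$, its weak $L^{p'}$ limit is identified pointwise, so $\langle J'_{s,p}(u_{k_j}),v\rangle\to\langle J'_{s,p}(u),v\rangle$ for every fixed test $v$, and the inequality $(-\Delta_p)^s u\le 1$ follows by simply passing to the limit. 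This is the missing idea in your compactness part.

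Your max-closure route, by contrast, is different from the paper's but sound. The paper argues indirectly through an auxiliary obstacle problem: it minimizes $J_{s,p}(z)-\int_\Omega z$ over $\{z\in W^{s,p}_0(\Omega):z\le w\}$, shows from one-sided variations that the minimizer $z_0$ satisfies $(-\Delta_p)^s z_0\le 1$, and then proves $z_0\ge u$, $z_0\ge v$ by comparison, forcing $z_0=w\in\K_s$. Your four-block splitting reaches the same conclusion more directly: on the mixed blocks the sandwich for $w(x)-w(y)$, combined with the monotonicity of $t\mapsto|t|^{p-2}t$ and $\phi\ge0$, makes the limit of your cut-off sum dominate the corresponding piece of $\langle J'_{s,p}(w),\phi\rangle$ pointwise, so only the routine $\varepsilon\to0$ passage remains. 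The one genuine gap in the proposal is the Minty step.
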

\begin{proof}	
	Given $ \{u_k\}_{k\in\N}\subset \K_s$, as each $u_k$ verifies $(-\Delta_p)^su_k\leq 1$ in $\Omega$ then for any non-negative test function $v\in W^{s,p}_0(\Omega)$ we have that 	
	$$
	\langle J'_{s,p}(u_k),v\rangle \leq \int_{\Omega}v\,dx.
	$$
	Taking $v:=u_k$, we deduce that $\{u_k\}_{k\in\N}$ is bounded in $W^{s,p}_0(\Omega)$, hence there exist a subsequence namely $\{u_k\}_{k\in\N}$, and $u\in W^{s,p}_0(\Omega)$ such that $u_k\rightharpoonup u$ in $ W^{s,p}_0(\Omega)$.
	
	Now, if we call
	 $$
	\eta_k(x,y)=\frac{|u_k(x)-u_k(y)|^{p-2}(u_k(x)-u_k(y))
}{|x-y|^{\frac{n+sp}{p'}}},	
	$$
	then $\eta_k$ is a bounded sequence in $L^{p'}(\R^n\times \R^n)$, where $p'$ is the Lebesgue conjugate of $p$. Therefore, there exist $\eta(x,y)$ $\in L^{p'}(\R^n \times \R^n)$ such that $\eta_k \rightharpoonup \eta$ weakly in $L^{p'}(\R^n \times \R^n)$. Therefore,
	\[
	\int_{\R^n \times \R^n} \eta_k(x, y) \frac{v(x) - v(y)}{|x - y|^{\frac{n}{p}+s}} \, dx \, dy \rightarrow \int_{\R^n \times \R^n} \eta(x, y) \frac{v(x) - v(y)}{|x - y|^{\frac{n}{p}+s}} \, dx \, dy.
	\]
	Since $u_k \rightarrow u$ almost everywhere in $\R^n$, one obtains that 
	$$\eta_k(x,y) \rightarrow \frac{|u(x) - u(y)|^{p-2}(u(x) - u(y))}{ |x-y|^{n+sp}}$$ a.e. in $\R^n \times \R^n$. Thus,
	\[
	\eta(x, y) = \frac{|u(x) - u(y)|^{p-2}(u(x) - u(y))}{|x - y|^{\frac{n+sp}{p'}}} .
	\]
Finally, for any $v\in W^{s,p}_0(\Omega)$ it follows that,
$$
		\langle J_{s,p}'(u),v\rangle =\lim_{k\to\infty}\langle J_{s,p}'(u_k),v\rangle\leq \int_{\Omega}v\,dx.
$$		
Hence, $u\in K_s$ and we conclude the fist part of the proof.

%
%

Let us denote $w=\max\{u,v\}$ and consider the convex set 
	$$
	E=\{z\in W^{s,p}_0(\Omega)\colon z\leq w \text{ in }\Omega\},
	$$
	and $z_0$ the unique minimizer of the functional $I\colon E\to \R$ defined by
	$$
	I(z)=J_{s,p}(z)-\int_{\Omega} z\,dx.
	$$
	Even more, $z_0$ verifies the variational inequality
	\begin{equation}\label{eq2}	
	\langle J_{s,p}'(z_0) , z-z_0\rangle 
	\geq \int_{\Omega} (z-z_0)\,dx\quad\forall v\in E.
	\end{equation}
	Let us see that $(-\Delta_p)^sz_0\leq 1.$ Given $\phi \in W^{s,p}_0(\Omega)$ such that $\phi\leq0$ and the funtional $i:\R_{+}\to\R$, defined as $i(t)=I(z_0+t\phi)$.	
	
	Observe that $i'(0)\geq 0$, in consequence, for any non-positive $\phi \in W^{s,p}_0(\Omega)$, it holds that 
	$$\langle J'_{s,p}(z_0), \phi\rangle \geq \int_{\Omega} \phi \,dx.$$
	 Then $$\langle J'_{s,p}(z), \phi\rangle \leq \int_{\Omega} \phi \,dx$$ for any $\phi \in W^{s,p}_0(\Omega)$, $\phi \geq 0$, and the claim follows.
	
	Now, we will prove that $z_0 \geq u$ (and for symmetry reasons that $z_0 \geq v$), from where it will follow that $z_0 \geq w$. Since $z_0 \in E$, the reverse inequality holds, and we can conclude that $z_0 = w \in \mathcal{K}_s$.
	Let $\eta=\max\{z_0,u\}$, since $\eta\in E$ we can use it as a test function in \eqref{eq2}. Thus,
	$$
	\langle J'_{s,p}(z_0),\eta-z_0\rangle\geq \int_{\R^n}(\eta-z_0)\, dx.
	$$
	Also, as $\eta-z_0\geq0$ and $(-\Delta_p)^su\leq 1$ in $\Omega$, it follows that
	$$
	\langle J'_{s,p}(u),\eta-z_0\rangle\leq \int_{\R^n}(\eta-z_0)\, dx.
	$$
	From both inequalities it is straightforward to see that
	$$
	0\leq\langle J'_{s,p}(z_0-u,\eta-z_0)\rangle\leq -2 J_{s,p}((u-z_0)^+)
	$$
	and then $(u - z_0)^+ = 0$ in $\R^n$, which implies that $z_0 \geq u$ in $\R^n$.	
\end{proof}
Now we can return to the proof of the theorem \ref{teo1}.
\begin{proof}
Let $F_s$ a functional that verifies hypothesis $(H_1)$ and $(H_2)$. Define the set 
$$
\A_{s}^c=\{A\in \A_{s}(\Omega)\colon |A|\leq c\}.
$$ 
Given $w\in \mathcal{K}_s$ define 
\begin{equation}
m(w)=\inf\{F(A)\colon A\in \A_{s}^c(\Omega), u_A^s\leq w\}.
\end{equation}
This functional is not lower semicontinuous, therefore we define $G$ to be the lower semicontinuous envelope of $m$ in $\mathcal{K}$ with the strong topology in $L^p$, i.e,
\begin{equation}\label{eq3}
G(w)=\inf\{\liminf_{k\to\infty} m(w_k)\},
\end{equation}
where the infimun is taken over all sequences $\{w_k\}_{k\in \N}$ in $\mathcal{K}$ such that $w_k\to w$ in $L^p(\Omega)$.

Let see us that $G$ is decreasing in $\K_s$. Let $u,v\in \K_s$ be such that $u\leq v$ and let $\{u_k\}_{k\in\N}\subset \K_s$ be such that $u_k\to u$ in $L^p(\Omega)$ and $m(u_k)\to G(u)$.
By Lemma \ref{lemmaK}, $v_k=\max\{v,u_k\}\in \K_s$ for each $k\in \N$ and $v_k\to v=\max\{v,u\}$ in $L^p(\Omega)$. Consequentely, since $m$ is non-increasing and $v_k\geq u_k$ for any $k\in\N$ we get 
$$
G(v)\leq \liminf_{k\to\infty}m(v_k)\leq \lim_{k\to\infty}m(u_k)=G(u).
$$
Now, let us see that $G$ verifies $(G_3)$, i.e.
\begin{equation}\label{G3}
G( u_A^s)=F(A),\text{ for every }A\in\A_{s,p}(\Omega).
\end{equation}
Given $A\in \A_{s,p}(\Omega)$, from \eqref{eq3} it follows that $G( u_A^s)\leq F(A)$. To prove the reverse inequality, it suffices to see that
$$
F(A)\leq \liminf_{k\to\infty}m(w_k)
$$
for any sequence $\{w_k\}_{k\in\N}\subset\K_s$ such that $w_k\to  u_A^s$ in $L^p(\Omega)$.
By definition of $m$, there exist $A_k\in \A_{s}^c(\Omega)$ such that 
$$
F(A_k)\leq m(w_k)+\frac{1}{k}\text{  and  }u^s_{A_k}\leq w_k.
$$ 
Observe that $u^s_{A_k}\in \K_s$ for each $k$, hence $\{u^s_{A_k}\}_{k\in\N}$ is bounded in $W^{s,p}_0(\Omega)$. Then, by \ref{lemmaK} up to a subsequence, there exist $u\in \K_s$ such that $u^s_{A_k}\to u$ in $L^p(\Omega)$.
Since $w_k\to  u_A^s$ in $L^p(\Omega)$, from $u^s_{A_k}\leq w_k$ we get $u\leq  u_A^s$.

Hence, given $\varepsilon>0$ we consider the set $A^\varepsilon=\{ u_A^s>\varepsilon\}$. Note that $u_{A_k\cup A^\varepsilon}^s\in \K_s$ and there exist $u^\varepsilon \in \K_s$ such that $ u_{A_k\cup A^\varepsilon}^s\to u^\varepsilon$ in $L^p(\Omega)$. Therefore, by Lemma \ref{lemmaB} $u^\varepsilon\leq  u_A^s$.

Next, let us see that $( u_A^s-\varepsilon)^+\leq  u_A^s.$ From
$$
( u_A^s-\varepsilon)^+(x)-( u_A^s-\varepsilon)^+(y)=\begin{cases}
 u_A^s(x)- u_A^s(y)&\quad\text{ if } x,y\in A^\varepsilon,\\
 u_A^s(x)-\varepsilon&\quad\text{ if } x\in A^\varepsilon\text{ and } y\not\in A^\varepsilon,\\
\varepsilon- u_A^s(y)&\quad\text{ if } y\in A^\varepsilon\text{ and } x\not\in A^\varepsilon,\\
0&otherwise,
\end{cases}
$$
for any $v\in W^{s,p}_0(A^\varepsilon)$ such that $v\geq 0$, we get that 
\begin{align*}
	\iint_{\R^n \times \R^n}&\frac{|(u(x)-\varepsilon)^+-(u(y)-\varepsilon)^+|^{p-2}((u(x)-\varepsilon)^+-(u(y)-\varepsilon)^+)(v(x)-v(y))}{|x-y|^{n+sp}}\geq\\&
	\iint_{\R^n \times \R^n}\frac{|u(x)-u(y)|^{p-2}(u(x)-u(y))(v(x)-v(y))}{|x-y|^{n+sp}}.
\end{align*}
Hence, $(-\Delta_p)^s( u_A^s-\varepsilon)^+\leq(-\Delta_p)^s  u_A^s=1=(-\Delta_p)^su^s_{A^\varepsilon}$ in $A^\varepsilon$. Moreover, since $0=( u_A^s-\varepsilon)^+=u^s_{A^\varepsilon}$ in $\R^n\setminus A^\varepsilon,$ from comparison principle, it follows that $( u_A^s-\varepsilon)^+\leq u_{A^\varepsilon}^s$ in $\R^n$.

In conclusion, we have obtained the following chain of inequalities
$$
( u_A^s-\varepsilon)^+\leq u_{A^\varepsilon}^s\leq u_{A_k\cup A^\varepsilon}^s.
$$
Since $u^\varepsilon\leq  u_A^s$ and $u_{A_k\cup A^\varepsilon}^s\to u^\varepsilon$ we conclude that
$$
( u_A^s-\varepsilon)^+\leq u^\varepsilon\leq  u_A^s.
$$
As the set $\{u^\varepsilon\}_{\varepsilon>0}$ is uniformly bounded in $W^{s,p}_0(\Omega)$, up to a subsequence we get that $u^\varepsilon\to  u_A^s$ in $L^p(\Omega)$. Hence by standard diagonal argument, there exist a sequence $\{\varepsilon_k\}_{k\in\N}$ such that 
$$
u_{A_k\cup A^{\varepsilon_k}} \to  u_A^s \text{ in }L^p(\Omega).
$$
By definition \ref{def gamma} $(A_k\cup A^{\varepsilon_k})$ $\gamma-$converges to $A$, obtaining the desired inequality,
$$
F(A)\leq \liminf_{k\to\infty}F(A_k\cup A^{\varepsilon_k})\leq \liminf_{k\to\infty}F(A_k)\leq \liminf_{k\to\infty}m(w_k).
$$
Now, let us focus on the final part of the proof.
Let $\{w_k\}\subset \K_s$ be a sequence such that $|\{w_k>0\}|\leq c$ and
$$
\lim_{k\to\infty}G(w_k)=\inf\{G(w)\colon w\in \K_s,\ |\{w_k>0\}|\leq c\}:=g.
$$
Also, there exist $w_0\in \K_s$ such that $w_k\to w_0$ in $L^{p}(\Omega)$ hence $|\{w_0>0\}|\leq c$, and since $G$ is semicontinuous on $\K_s$,
$$
g\leq G(w_0)\leq \liminf_{k\to\infty}G(w_k)=g
$$
so, $w_0$ verifies 
\begin{equation}\label{eq4}
w_0=\min\{G(w)\colon w\in \K_s,\ |\{w>0\}|\leq c\}.
\end{equation}
Finally, consider the set $A_0=\{w_0>0\}$. Therefore, $A_0\in \A_{s,p}^{c}(\Omega)$ and by Lemma \ref{lemma2}, $w_0\leq u^s_{A_0}$.
For every $A\in \A_{s,p}^c(\Omega)$, we know that $ u_A^s\in \K_s$, $|\{ u_A^s>0\}|\leq c$. Then, as $G$ is decreasing, verifies \eqref{G3} and \eqref{eq4} we have 
$$
F(A_0)=G(u^s_{A_0})\leq G(w_0)\leq G( u_A^s)=F(A).
$$
Therefore, $A_0$ is solution of the problem $\ref{P}$ and the proof is conclude.
\end{proof}

\section{Asymptotic behavior when $s\nearrow 1$}\label{sec_asint}
In this section, based on \cite{BBM-01,BONDER-SALORT-20} we will analyses the behavior when $s\nearrow 1$.

In order to perform such analysis we need to assume some asymptotic behavior
on the cost functional $F_s$. In order to do this, we need to define a notion of
convergence for sets when $s$ varies.
\begin{defn}
	Let $0<s_k\nearrow1$, and let $A_k\in\A_{s_k,p}(\Omega)$ and $A\in \A_{1,p}$. We say that, 
	$\A_k\xrightarrow{\gamma,k}\A$ if $u_{A_k}^{s_k}\to  u_A^1$ strongly in $L^p(\Omega)$.
	\end{defn}
\begin{thm}\label{teo2}
For any $0<s\leq1$, let $F_s\colon A_{s,p}(\Omega)\to \R$ be a functional that verifies
\begin{itemize}
	\item [($H_3$)] Continuity with respect to \(A\); that is, if \(A \in \A_1(\Omega)\), then
	$$
	F_1(A) = \lim_{s \uparrow 1} F_s(A). $$
	\item[($H_4$)]For every \(0 < s_k \uparrow 1\) and \(A_k \to A\), then
	\[ F_1(A) \leq \liminf_{k} F_{s_k}(A_k). \]
\end{itemize}

Assume moreover that $(H_1)$ and $(H_2)$ are satisfied. Then
$$
\min\{F_1(A):A\in \A_{1}(\Omega), |A|\leq c\}=\lim_{s\nearrow 1}\min\{F_s(A):A\in \A_{s}(\Omega), |A|\leq c\}.
$$
\end{thm}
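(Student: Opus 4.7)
The plan is to establish the two inequalities $\limsup_{s\nearrow 1}\min F_s\le \min F_1$ and $\min F_1\le \liminf_{s\nearrow 1}\min F_s$, using Theorem \ref{teo1} to guarantee the existence of a minimizer at every admissible level of $s$. Note that for each $s\in(0,1]$, hypotheses $(H_1)$ and $(H_2)$ applied to $F_s$ produce a minimizer, so the infima above are realized and the statement reduces to a convergence of minimum values.

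For the upper bound, pick a minimizer $A^\ast\in\A_1(\Omega)$ with $|A^\ast|\le c$ for the $s=1$ problem. Since a $(1,p)$-quasi-open set is in particular $(s,p)$-quasi-open, $A^\ast$ stays admissible at every fractional level, whence $\min F_s\le F_s(A^\ast)$. Invoking $(H_3)$ gives
$$
\limsup_{s\nearrow 1}\min F_s\le \lim_{s\nearrow 1}F_s(A^\ast)=F_1(A^\ast)=\min F_1.
$$

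For the lower bound, choose a realizing sequence $s_k\nearrow 1$ and minimizers $A_k\in\A_{s_k,p}(\Omega)$ of $F_{s_k}$. Testing the state equation $(-\Delta_p)^{s_k}u_k=1$ with $u_k:=u_{A_k}^{s_k}$ itself yields
$$
\tfrac{1-s_k}{K(n,p)}[u_k]_{s_k,p}^{p}=\int_\Omega u_k\,dx\le |\Omega|^{1/p'}\|u_k\|_{L^p(\Omega)},
$$
and a Bourgain--Brezis--Mironescu type bound (see \cite{BBM-01,BONDER-SALORT-20}) translates this into a uniform $W^{1,p}_0(\Omega)$ control on $u_k$. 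Up to subsequence, $u_k\rightharpoonup u$ weakly in $W^{1,p}_0(\Omega)$ and strongly in $L^p(\Omega)$, with $u\ge 0$. Setting $A:=\{u>0\}$, this is a $(1,p)$-quasi-open set and the volume constraint passes to the limit via Fatou: for each $\delta>0$, a.e.\ on $\{u>\delta\}$ one has $u_k>\delta/2$ eventually, so $\{u>\delta\}\subset A_k$ up to a null set and $|\{u>\delta\}|\le \liminf_k|A_k|\le c$; letting $\delta\to 0$ gives $|A|\le c$.

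The decisive step is to identify $u=u_A^1$, equivalently, $A_k\xrightarrow{\gamma,k} A$ in the sense of the varying-$s$ convergence. The plan is to combine the maximum characterization of $u_A^1$ from Lemma \ref{lemma2} with the $\Gamma$-convergence of the normalized nonlocal energies $\tfrac{1-s_k}{K(n,p)}J_{s_k,p}$ to the local Dirichlet $p$-energy $J_{1,p}$, adapting Lemmas \ref{lemaA} and \ref{lemmaB} to the setting where the operator itself varies with $k$. Once the identification is achieved, hypothesis $(H_4)$ yields
$$
\min F_1\le F_1(A)\le \liminf_k F_{s_k}(A_k)=\liminf_{s\nearrow 1}\min F_s,
$$
closing the chain of inequalities. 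The main obstacle is precisely this identification: the vanishing constraint $u_k=0$ q.e.\ on $\Omega\setminus A_k$ is expressed through an $(s_k,p)$-capacity that itself evolves with $k$, so the arguments transmitting it to the limit (Lemmas \ref{lemaA} and \ref{lemmaB}) must be upgraded to cope with a sequence of nonlocal operators whose order degenerates toward $1$, a delicate use of the nonlocal-to-local machinery from \cite{BBM-01,BONDER-SALORT-20}.
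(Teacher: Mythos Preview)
Your upper bound is exactly the paper's: test each fractional problem with a fixed $(1,p)$-minimizer and invoke $(H_3)$.

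The lower bound, however, has a genuine gap. You propose to show that the weak limit $u$ of $u_{A_k}^{s_k}$ coincides with $u_A^1$ on $A=\{u>0\}$, i.e.\ that $A_k\xrightarrow{\gamma,k}A$, and then apply $(H_4)$ directly to the sequence $A_k$. This identification is \emph{not true in general}. From $u\in\K_1$ and Lemma~\ref{lemma2} you only get $u\le u_A^1$; the reverse inequality can fail. The classical picture is homogenization: if the $A_k$ are, say, a fixed domain perforated by many small holes of vanishing but non-negligible capacity, then $u_{A_k}$ converges to the solution of a relaxed problem $-\Delta_p v + \mu\, v^{p-1}=1$ with a nontrivial measure $\mu$, so that $v>0$ on the full domain yet $v$ is \emph{strictly below} the torsion function of that domain. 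In such a situation $u\neq u_A^1$ and $A_k\not\xrightarrow{\gamma,k}A$, so $(H_4)$ cannot be invoked on $A_k$ itself. A symptom of the gap is that your argument never uses $(H_2)$.

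The paper closes this gap by an enlargement trick (Proposition~\ref{prop1}): one does not try to prove $A_k\to A$, but instead builds $\tilde A_k:=A_k\cup\{u_A^1>\varepsilon_k\}\supset A_k$ and shows, via the varying-$s$ analogues of Lemmas~\ref{lemaA} and~\ref{lemmaB} (Lemmas~\ref{lemmaA2} and~\ref{lemmaB2}), that $u_{\tilde A_k}^{s_k}\to u_A^1$, i.e.\ $\tilde A_k\xrightarrow{\gamma,k}A$. Then $(H_4)$ applied to $\tilde A_k$ and monotonicity $(H_2)$ applied to $A_k\subset\tilde A_k$ give
\[
F_1(A)\le \liminf_k F_{s_k}(\tilde A_k)\le \liminf_k F_{s_k}(A_k),
\]
which is the missing chain. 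Your adaptation of Lemmas~\ref{lemaA}--\ref{lemmaB} is indeed needed, but it feeds into this enlargement construction, not into the direct identification $u=u_A^1$.
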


%
%
%
Our primary aim is to establish that a sequence $\{u_k\}_{k\in\N}\in L^p(\Omega)$ such that $u_k\in \K_{s_k}$ is precompact and every accumulation point belong to $\K_1$.

\begin{lema}\label{lemma1} 
	Let $0<s_k\nearrow 1$ and let $u_k\in \K_{s_k}$. Then, there exist $u\in W^{1,p}_0(\Omega)$ and a subsequence $\{u_{k_j}\}_{j\in \N}$ such that $u_{k_j}\to u$ in $L^p(\Omega)$ and $u\in \K_1$, where 
	$$
	\K_1=\{w\in W^{1,p}_0(\Omega),\ w\geq0,\ (-\Delta_p)w\leq1\}.
	$$
	
\end{lema}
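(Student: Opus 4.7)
The strategy mirrors the proof of Lemma~\ref{lemmaK}: first extract a uniform energy bound from the defining variational inequality, then extract a strongly convergent subsequence, then pass to the limit to verify membership in $\K_1$. The novelty compared to Lemma~\ref{lemmaK} is that $s_k$ varies and tends to $1$, so compactness and limit identification must rely on the Bourgain--Brezis--Mironescu asymptotics rather than on a fixed-$s$ compact embedding.

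For the uniform bound I would test the inequality $(-\Delta_p)^{s_k} u_k \leq 1$ against the admissible non-negative function $u_k$ itself. Keeping track of the normalization factor $(1-s_k)/K(n,p)$ built into $(-\Delta_p)^{s_k}$, this yields
$$
\frac{1-s_k}{K(n,p)} J_{s_k,p}(u_k) \leq \int_\Omega u_k\,dx \leq |\Omega|^{1/p'}\|u_k\|_{L^p(\Omega)}.
$$
Coupled with the fractional Poincar\'e inequality $\|u_k\|_{L^p(\Omega)}^p \leq C(\Omega,n,p)(1-s_k)\, J_{s_k,p}(u_k)$, whose constant remains bounded as $s_k \to 1$, an absorption argument produces the uniform estimate
$$
\sup_k\bigl[(1-s_k)\, J_{s_k,p}(u_k) + \|u_k\|_{L^p(\Omega)}^p\bigr] < \infty.
$$

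For compactness I would appeal to the BBM-type theorem of \cite{BBM-01} (in the form used in \cite{BONDER-SALORT-20}): any sequence supported in the fixed bounded set $\Omega$, uniformly bounded in $L^p$ and in the scaled Gagliardo seminorm $(1-s_k)[\,\cdot\,]_{s_k,p}^p$, is relatively compact in $L^p(\Omega)$ with every accumulation point in $W^{1,p}(\R^n)$. Extracting a subsequence so that $u_{k_j} \to u$ strongly in $L^p(\Omega)$ and a.e., the condition $u_{k_j}=0$ outside $\Omega$ passes to the limit, giving $u\in W^{1,p}_0(\Omega)$, and the pointwise bound $u_{k_j} \geq 0$ yields $u \geq 0$ a.e.

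To verify $(-\Delta_p) u \leq 1$ in $\Omega$, I would fix a non-negative test $v\in C_c^\infty(\Omega)$ and pass to the limit in
$$
\frac{1-s_k}{K(n,p)} \iint_{\R^n\times\R^n} \frac{|u_k(x)-u_k(y)|^{p-2}(u_k(x)-u_k(y))(v(x)-v(y))}{|x-y|^{n+s_k p}}\,dx\,dy \leq \int_\Omega v\,dx.
$$
Using strong $L^p$ and a.e. convergence of $u_k$ together with the Bourgain--Brezis--Mironescu asymptotics for the weighted nonlinear bilinear form (whose $v$-factor, being smooth, concentrates on the diagonal at the correct rate), the left-hand side should tend to $\int_\Omega |\nabla u|^{p-2}\nabla u\cdot \nabla v\,dx$, which by density of smooth functions yields the claim. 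The main obstacle is precisely this last step: while the scalar BBM formula handles the energy $J_{s_k,p}(u_k)\to K(n,p)\|\nabla u\|_{L^p}^p$, here one needs its bilinear nonlinear analogue for $\langle J'_{s_k,p}(u_k),v\rangle$. Making this rigorous requires either a direct argument combining a.e. convergence of $u_k$, pointwise convergence of the rescaled finite-difference quotients of $v$, and dominated convergence via the uniform energy bound, or invoking Mosco-type convergence of $\frac{1-s_k}{K(n,p)}J_{s_k,p}$ to $J_{1,p}$ and the associated convergence of nonlinear subdifferentials along the lines sketched in the proof of Lemma~\ref{lemaA}.
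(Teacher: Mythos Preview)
Your proposal is correct and follows essentially the same route as the paper: uniform energy bound via testing against $u_k$ combined with the scaled Poincar\'e inequality, then BBM compactness \cite{BBM-01} to extract an $L^p$-convergent subsequence with limit in $W^{1,p}_0(\Omega)$, and finally passage to the limit to land in $\K_1$. The paper's own proof is in fact terser than yours: for the last step it simply invokes results from \cite{BONDER-SALORT-20,BRASCO-PARINI-16} without unpacking the bilinear limit, so your explicit identification of that point as the technical crux, and the two strategies you outline for it, are entirely in line with what the paper relies on implicitly.
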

\begin{proof}
	As each $\K_{s_k}$ is a bounded set, and $W^{s_k,p}_0(\Omega)\subset W^{1,p}_0(\Omega)$ for all $k\in \N$. From \cite{BRASCO-PARINI-16} the optimal constant in Poincaré’s inequality
	$$
	 \|u\|_{L^p(\Omega)}^p \leq C(\Omega,p, s) [u]_{s,p}^p, 
	$$
	has a dependence on $s$ of the form	$ C(\Omega, p, s) \leq (1 - s)C(\Omega,p). $
	Therefore, there exists a constant $C$ such that
	$$
	\sup_{k\in \N}(1-s_k)[u_k]_{s_k,p}\leq C,
	$$
	and we can apply the well-known result [Theorem 4, \cite{BBM-01}] to conclude that there exist a subsequence $\{u_{k_j}\}_{k\in \N}$ such that $u_{k_j}\to u$ in $L^p(\Omega)$ and $u\in W^{1,p}_0(\Omega)$.
	 Finally, its clear that $u\geq0$ from results in  \cite{BONDER-SALORT-20,BRASCO-PARINI-16} and we get that $(-\Delta_p)u\leq1$ to conclude the proof.
	\end{proof}
Next, we want to analyze the continuity of \(  u_A^s\) as \(s\) approaches 1. 
\begin{lema}\label{lemma3}
	For every $A\in \A_{1}(\Omega)$, $u_A^s\to u_A$ strongly in $L^p(\Omega)$ when $s\nearrow 1$.	
\end{lema}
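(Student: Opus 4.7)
My plan is to use a compactness-plus-identification argument, together with the variational characterization of $u_A$ encoded in Lemma \ref{lemma2} and its local counterpart. The precompactness is already free from Lemma \ref{lemma1}, so the work is in pinning down the limit.

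The first step is precompactness: fix any sequence $s_k \nearrow 1$. From Lemma \ref{lemma2} we know $u_A^{s_k} \in \K_{s_k}$, so Lemma \ref{lemma1} delivers a (not relabeled) subsequence and a function $u^{\ast} \in \K_1$ with $u_A^{s_k} \to u^{\ast}$ in $L^p(\Omega)$. In particular $u^{\ast} \in W^{1,p}_0(\Omega)$, $u^{\ast} \geq 0$, and $(-\Delta_p) u^{\ast} \leq 1$ in $\Omega$. It therefore suffices to show $u^{\ast} = u_A$, because then every subsequence of $\{u_A^s\}_{s \nearrow 1}$ admits a further subsequence converging to the same limit $u_A$, forcing convergence of the whole family.

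The next step is to show $u^{\ast} \leq u_A$ via the Lemma \ref{lemma2}-style characterization. Since each $u_A^{s_k}$ vanishes q.e. (hence a.e.) in $\mathbb{R}^n \setminus A$, passing to an a.e.-convergent subsequence yields $u^{\ast} = 0$ a.e.\ outside $A$. Combined with $u^{\ast} \in W^{1,p}_0(\Omega)$ and the comparability of $(s,p)$- and $(1,p)$-capacities for $s$ near $1$, this upgrades to $u^{\ast} = 0$ $(1,p)$-q.e.\ in $\mathbb{R}^n \setminus A$, i.e. $u^{\ast} \in W^{1,p}_0(A)$. Applying the $s = 1$ version of Lemma \ref{lemma2} (proved in \cite{FUSCO-19}) gives $u^{\ast} \leq u_A$.

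The reverse inequality $u_A \leq u^{\ast}$ is the main obstacle and is obtained by an energy comparison of BBM type. Write $E_s(w) = \frac{1-s}{p K(n,p)} [w]_{s,p}^p - \int_\Omega w\,dx$ for $s < 1$ and $E_1(w) = \frac{1}{p}\int_\Omega |\nabla w|^p\,dx - \int_\Omega w\,dx$, so that $u_A^{s_k}$ and $u_A$ are, respectively, the unique minimizers of $E_{s_k}$ on $W^{s_k,p}_0(A)$ and of $E_1$ on $W^{1,p}_0(A)$. Since $u_A \in W^{1,p}_0(A) \subset W^{s_k,p}_0(A)$, the minimality property gives $E_{s_k}(u_A^{s_k}) \leq E_{s_k}(u_A)$, and the BBM convergence results invoked in \cite{BBM-01,BONDER-SALORT-20} (plus the $L^p$-continuity of the linear term) yield $\lim_k E_{s_k}(u_A) = E_1(u_A)$. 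Combining this with the $\liminf$ inequality from Lemma \ref{lemma1}/\cite{BBM-01} applied to $u_A^{s_k} \to u^{\ast}$ in $L^p(\Omega)$ gives $E_1(u^{\ast}) \leq \liminf_k E_{s_k}(u_A^{s_k}) \leq E_1(u_A)$. Since $u^{\ast} \in W^{1,p}_0(A)$ and $u_A$ is the unique minimizer of $E_1$ there, this forces $u^{\ast} = u_A$, completing the proof.

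I expect step three — in particular, confirming that $u^{\ast} \in W^{1,p}_0(A)$ when $A$ is only $(1,p)$-quasi-open — and the BBM-type $\liminf$ inequality for the sequence $u_A^{s_k}$ to be the most delicate parts. The capacity comparison and the BBM convergence need to be used carefully so that the energy functionals $E_{s_k}$ and $E_1$ match in normalization, but once this is set up correctly the rest of the argument is a standard minimizer-convergence argument under $\Gamma$-convergence.
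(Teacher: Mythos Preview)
Your argument is correct and is essentially an unpacked version of the paper's proof: the paper observes that $u_A^s$ minimizes $I_s(w)=J_{s,p,A}(w)-\int_A w$, cites \cite{PONCE-04} for the $\Gamma$-convergence of $J_{s,p,A}$ to $J_{1,p,A}$ in $L^p$, and concludes convergence of minimizers directly, whereas you carry out the same scheme by hand via compactness (Lemma~\ref{lemma1}), the BBM $\liminf$ inequality, and the constant recovery sequence $u_A$. The inequality $u^\ast\le u_A$ in your step~2 is superfluous once step~3 gives $u^\ast=u_A$; the only content you actually need from step~2 is $u^\ast\in W^{1,p}_0(A)$, and you rightly flag this as the delicate point for merely quasi-open $A$ --- the paper leaves it implicit in the cited $\Gamma$-limit.
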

\begin{proof}
	Let us remind, that $u_A^s$ is the minimizer of  the functional 
	$$
I_s(w)=\begin{cases}
	J_{s,p,A}(w)-\int_{A} w\,dx&\textit{ if } w\in W^{s,p}_0(A),\\
	\infty&\text{ otherwise }.
\end{cases}.
	$$
	By \cite{PONCE-04}, we have that $J_{s,p,A}(w)$ $\Gamma$-converge to $J_{1,p,A}(w)$ in $L^p$. Hence, the minimizer of $J_{s,p,A}(w)-\int_{A} w\,dx$ converge to the minimizer of $J_{1,p,A}(w)-\int_{A} w\,dx$ strongly in $L^p(\Omega)$.
\end{proof}	
The following to lemmas, are the counterpart of Lemmas \ref{lemaA} and \ref{lemmaB}.
\begin{lema}\label{lemmaA2}
	Let $0<s_k\nearrow1$ and for every $k\in \N$ let $A_k\in \A_{s_k}(\Omega)$ be such that $u_{A_k}^{s_k}\to u$ strongly in $L^p(\Omega)$. Let $\{w_k\}_{k\in \N}\subset L^p(\Omega)$ be such that $w_k\in W^{s_k,p}_0(A_k)$ for every $k\in \N$ and $\sup_{k\in \N}(1-s_k)[w_k]_{s_k,p}^p<\infty$. Assume moreover that $w_k\to w$ strongly in $L^p(\Omega)$. Then, $w\in W^{1,p}_0(\{u>0\})$.
\end{lema}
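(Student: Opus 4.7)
The plan is to combine the Bourgain--Brezis--Mironescu compactness used in Lemma \ref{lemma1} with the $\Gamma$-convergence plus subdifferential strategy of Lemma \ref{lemaA}. The conclusion $w\in W^{1,p}_0(\{u>0\})$ splits into (a) $w\in W^{1,p}_0(\Omega)$ and (b) $w=0$ $(1,p)$-q.e.\ on $\{u=0\}$; I would treat the two separately.

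For (a) the uniform bound $\sup_k(1-s_k)[w_k]_{s_k,p}^p<\infty$ together with $w_k\to w$ in $L^p(\Omega)$ is precisely the hypothesis of Theorem 4 of \cite{BBM-01} already invoked in Lemma \ref{lemma1}, which gives $w\in W^{1,p}(\R^n)$. Since every $w_k$ vanishes outside $A_k\subset\Omega$, the limit vanishes outside $\Omega$, hence $w\in W^{1,p}_0(\Omega)$.

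For (b) I would mirror the argument of Lemma \ref{lemaA}, replacing the energies by their $(1-s_k)/K(n,p)$-rescalings so that the $\Gamma$-limit lands in the $W^{1,p}$ world. Concretely, define on $L^p(\Omega)$
$$
\Phi_k(z)=\begin{cases}\dfrac{1-s_k}{K(n,p)}\,J_{s_k,p,A_k}(z)&\text{if }z\in W^{s_k,p}_0(A_k),\\ +\infty&\text{otherwise,}\end{cases}
$$
and extract, by the general compactness of $\Gamma$-convergence in $L^p(\Omega)$, a subsequence along which $\Phi_k$ $\Gamma$-converges to some $\Phi\in\Gamma_0(L^p(\Omega))$. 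The BBM/Ponce result \cite{PONCE-04} (cf.\ Lemma \ref{lemma3}) identifies the $\Gamma$-limit of the full-space rescaled energies as $J_{1,p,\Omega}$, so $\Phi\leq J_{1,p,\Omega}$ on $W^{1,p}_0(\Omega)$, and in particular $\Phi$ is proper. The bound $\Phi_k(w_k)\leq C$ combined with the $\Gamma$-liminf inequality gives $\Phi(w)<\infty$, so $w\in Dom(\Phi)$. Exactly as in Lemma \ref{lemaA}, Remark \ref{rem1} then reduces matters to proving $\omega=0$ q.e.\ on $\{u=0\}$ for each $\omega\in Dom(\partial\Phi)$: fix such an $\omega$ and $f\in\partial\Phi(\omega)$, approximate $f$ by $f_\varepsilon\in L^\infty(\Omega)$ with $\|f_\varepsilon-f\|_{L^{p'}(\Omega)}<\varepsilon$, and let $\omega_{k,\varepsilon}\in W^{s_k,p}_0(A_k)$ solve $(-\Delta_p)^{s_k}\omega_{k,\varepsilon}=f_\varepsilon$ in $A_k$ with $\omega_{k,\varepsilon}=0$ outside. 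The comparison principle then gives $|\omega_{k,\varepsilon}|\leq\|f_\varepsilon\|_{L^\infty}\,u^{s_k}_{A_k}$ a.e.\ in $\Omega$; passing to the $k$-limit using $u^{s_k}_{A_k}\to u$ in $L^p(\Omega)$, and then letting $\varepsilon\to 0$, produces $\omega=0$ a.e.\ on $\{u=0\}$. Since both $\omega$ and $u$ live in $W^{1,p}_0(\Omega)$ and admit $(1,p)$-quasi-continuous representatives, this upgrades to q.e., and density of $Dom(\partial\Phi)$ in $Dom(\Phi)$ transfers the property to $w$.

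The main obstacle I anticipate is the rescaled $\Gamma$-convergence step: one must check that applying the compactness theorem of $\Gamma$-convergence to the $(1-s_k)$-scaled energies $\Phi_k$ yields a proper limit $\Phi\in\Gamma_0(L^p(\Omega))$ for which the subdifferential machinery of Remark \ref{rem1} is available, and that Theorem 7.23 of \cite{DALMASO-93} still provides $L^p$-convergence of the corresponding minimizers of $\Phi_k(\cdot)-\langle f,\cdot\rangle$. A secondary technical point is the careful passage from the a.e.\ bounds to the q.e.\ statement, which relies on the quasi-continuous representatives of the $W^{1,p}_0$ functions being used consistently throughout.
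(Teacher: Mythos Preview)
Your proposal is correct and follows essentially the same approach as the paper: the paper's own proof is simply the sentence ``The proof closely resembles that of Lemma \ref{lemaA}, albeit with a few adjustments and the utilization of the compactness results from \cite{BBM-01},'' and your write-up is precisely the natural execution of that sentence---BBM compactness for $w\in W^{1,p}_0(\Omega)$, then the rescaled $\Gamma$-convergence plus subdifferential argument of Lemma \ref{lemaA} for the vanishing on $\{u=0\}$. The technical points you flag (properness of the $\Gamma$-limit under the $(1-s_k)$ rescaling, and the a.e.-to-q.e.\ upgrade) are exactly the ``few adjustments'' implicit in the paper.
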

\begin{proof}
	The proof closely resembles that of Lemma \ref{lemaA}, albeit with a few adjustments and the utilization of the compactness results from \cite{BBM-01}.
	\end{proof}
\begin{lema}\label{lemmaB2}
	Let $s_k\nearrow1$ and for every $k\in\N$ let $A_k\in \A_{s_k}(\Omega)$, $A\in\A_{1}(\Omega)$. Assume that $u_{A_k}^{s_k}\to u$ in $L^p(\Omega)$ and that $u\leq u_A$.
	Then, if $u_{A_k\cup A^\varepsilon}^{s_k}\to u^\varepsilon$ strongly in $L^p(\Omega)$, where $A^\varepsilon:=\{u_A>u\}$, its holds that $u^\varepsilon\leq u_A$.
\end{lema}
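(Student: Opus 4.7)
\textbf{Proof plan for Lemma \ref{lemmaB2}.}
The strategy mirrors that of Lemma \ref{lemmaB}, with Lemma \ref{lemmaA2} taking the place of Lemma \ref{lemaA} and the $s=1$ instance of Lemma \ref{lemma2} replacing its fractional counterpart. I will read $A^\varepsilon$ as the superlevel set $\{u_A>\varepsilon\}$, parallel to the notation and role of $A^\varepsilon$ in Lemma \ref{lemmaB}, so that the natural barrier $v_\varepsilon := 1-\tfrac{1}{\varepsilon}\min(u_A,\varepsilon)$ is available: $v_\varepsilon$ is a Lipschitz truncation of $u_A\in W^{1,p}_0(A)$ with $0\le v_\varepsilon\le 1$, $v_\varepsilon=0$ on $A^\varepsilon$ and $v_\varepsilon=1$ on $\Omega\setminus A$.

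Next, I would set $w_k:=\min(v_\varepsilon,u_{A_k\cup A^\varepsilon}^{s_k})$. Since $v_\varepsilon\equiv 0$ on $A^\varepsilon$ and $u_{A_k\cup A^\varepsilon}^{s_k}\equiv 0$ q.e.\ on $\Omega\setminus(A_k\cup A^\varepsilon)$, one obtains $w_k=0$ q.e.\ on $\Omega\setminus A_k$, i.e.\ $w_k\in W^{s_k,p}_0(A_k)$. Using the pointwise inequality $|\min(f,g)(x)-\min(f,g)(y)|^p\le 2^{p-1}\bigl(|f(x)-f(y)|^p+|g(x)-g(y)|^p\bigr)$, the a priori bound $(1-s_k)[u_{A_k\cup A^\varepsilon}^{s_k}]_{s_k,p}^p\le C$ arising from $u_{A_k\cup A^\varepsilon}^{s_k}\in\K_{s_k}$ (as exploited in Lemma \ref{lemma1}), and the Bourgain--Brezis--Mironescu estimate $(1-s_k)[v_\varepsilon]_{s_k,p}^p\le C\|\nabla v_\varepsilon\|_{L^p}^p$ valid for $v_\varepsilon\in W^{1,p}(\R^n)$, I get $\sup_k(1-s_k)[w_k]_{s_k,p}^p<\infty$.

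Because $u_{A_k\cup A^\varepsilon}^{s_k}\to u^\varepsilon$ in $L^p(\Omega)$, one has $w_k\to\min(v_\varepsilon,u^\varepsilon)$ in $L^p(\Omega)$, so Lemma \ref{lemmaA2} applied with the sequences $\{A_k\}$ and $\{w_k\}$ (and limit $u$ of $u_{A_k}^{s_k}$) yields $\min(v_\varepsilon,u^\varepsilon)\in W^{1,p}_0(\{u>0\})$; in particular $\min(v_\varepsilon,u^\varepsilon)=0$ q.e.\ on $\{u=0\}$. The hypothesis $u\le u_A$ gives $\Omega\setminus A\subset\{u=0\}$, hence $\min(v_\varepsilon,u^\varepsilon)=0$ on $\Omega\setminus A$; combined with $v_\varepsilon=1$ there, this forces $u^\varepsilon=0$ q.e.\ on $\Omega\setminus A$. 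To finish, note that as the $L^p$-limit of $\{u_{A_k\cup A^\varepsilon}^{s_k}\}\subset\K_{s_k}$, Lemma \ref{lemma1} places $u^\varepsilon\in\K_1$, so $u^\varepsilon\ge 0$ and $(-\Delta_p)u^\varepsilon\le 1$ in $\Omega$. The $s=1$ case of Lemma \ref{lemma2} then identifies $u_A$ as the largest element of $W^{1,p}_0(\Omega)$ that is $\le 0$ on $\R^n\setminus A$ and satisfies $(-\Delta_p)v\le 1$, delivering $u^\varepsilon\le u_A$ q.e.\ on $\Omega$.

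The main obstacle is the uniform $s_k$-seminorm control on $w_k$ needed to invoke Lemma \ref{lemmaA2}; the elementary min-inequality reduces it to seminorm bounds on $v_\varepsilon$ and on $u_{A_k\cup A^\varepsilon}^{s_k}$, the first of BBM type and the second built into the definition of $\K_{s_k}$ used throughout Section \ref{sec_asint}. A secondary subtlety is checking that the membership $w_k\in W^{s_k,p}_0(A_k)$ holds in the q.e.\ sense, which follows from the locality of capacity together with the q.e.\ vanishing of $u_{A_k\cup A^\varepsilon}^{s_k}$ outside $A_k\cup A^\varepsilon$ and of $v_\varepsilon$ on $A^\varepsilon$.
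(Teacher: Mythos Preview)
Your proposal is correct and follows essentially the same route as the paper's proof: the same barrier $v_\varepsilon=1-\tfrac{1}{\varepsilon}\min(u_A,\varepsilon)$, the same auxiliary sequence $w_k=\min(v_\varepsilon,u_{A_k\cup A^\varepsilon}^{s_k})\in W^{s_k,p}_0(A_k)$, the same appeal to Lemma~\ref{lemmaA2} to force $u^\varepsilon=0$ on $\Omega\setminus A$, and the same closing step via Lemma~\ref{lemma1} and the $s=1$ case of Lemma~\ref{lemma2}. You are right that $A^\varepsilon$ must be read as $\{u_A>\varepsilon\}$, and your explicit verification of the uniform bound $\sup_k(1-s_k)[w_k]_{s_k,p}^p<\infty$ (via the pointwise min-inequality and BBM) fills in a hypothesis of Lemma~\ref{lemmaA2} that the paper's proof leaves implicit.
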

\begin{proof}
	By Lemma \ref{lemma1} we have that $u,u^\varepsilon\in W^{s,p}_0(\Omega)$. Let us define
	$$
	v^\varepsilon:=1-\frac{1}{\varepsilon}min\{u_A,\varepsilon\}.
	$$
	Observe that $0 \leq v^\varepsilon\leq 1$ and $v^\varepsilon=0$ in $A^\varepsilon$. 
	If we define
	\[u_{k,\varepsilon} := u_{,A_k^\varepsilon \cup A_\varepsilon}^s\qquad w_{k,\varepsilon} := \min\{v_\varepsilon, u_{k,\varepsilon}\},\]
	it holds that \(w_{k,\varepsilon} \geq 0\) since the comparison principle gives \(u_{k,\varepsilon} \geq 0\), and also \(v_\varepsilon \geq 0\).
	Since \(v_\varepsilon = 0\) in \(A_\varepsilon\), it follows that \(w_{k,\varepsilon} = 0\) in \(A_\varepsilon\). Moreover, since \(u_{k,\varepsilon} = 0\) in \(\mathbb{R}^n \setminus (A_k \cup A_\varepsilon)\), it holds that \(w_{k,\varepsilon} = 0\) in \(\mathbb{R}^n \setminus (A_k \cup A_\varepsilon)\), and consequently, \(w_{k,\varepsilon} \in W_0^{s_k,p}(A_k)\).
	Observe that \(w_{k,\varepsilon} \to w_\varepsilon := \min\{v_\varepsilon, u_\varepsilon\}\) strongly in \(L^p(\Omega)\), and then, applying Lemma \ref{lemmaB2}, we get \(w_\varepsilon \in W_0^{1,p}(\{u > 0\})\), from where \(w_\varepsilon = 0\) in \(\{u = 0\}\). The relation \(0 \leq u \leq u_A\) implies the inclusion \(\{u_A = 0\} \subset \{u = 0\}\), from where \(w_\varepsilon \in W_0^{1,p}(\{u_A > 0\})\). Moreover, since \(\{u_A > 0\} \subset A\), we have that \(w_\varepsilon = 0\) in \(\R^n \setminus A\). Now, being \(v_\varepsilon = 1\) in \(\mathbb{R}^n \setminus A\), we get \(u_\varepsilon = 0\) in \(\mathbb{R}^n \setminus A\), and in particular, \(u_\varepsilon \leq 0\) in \(\mathbb{R}^n \setminus A\).
	
	Let us show that \(-\Delta_p u_\varepsilon \leq 1\) in \(\Omega\). Observe that \(u_{k,\varepsilon} \in K_s^k\) and
	\(u_{k,\varepsilon} \to u_\varepsilon\) strongly in \(L^p(\Omega)\). Then, by Lemma \ref{lemma1}, \(u_\varepsilon \in K^1\) and \(-\Delta_p u_\varepsilon \leq 1\) in \(\Omega\). If we recall the Lemma \ref{lemma2} with $s=1$, 
	$$
	u_A=\max_{w\in W^{1,p}_0(\Omega)}\{ w\leq 0 \text{ in } \R^n\setminus A\text{ and } (-\Delta_p)w\leq 1 \text{ on } \Omega\},
	$$
	we conclude that $u^\varepsilon\leq u_A$.
\end{proof}	
\begin{prop}\label{prop1}
	Let $0 < s_k \uparrow 1$ , $A_k \in  \A_{s_k,p}^c(\Omega)$ be such that \(u^{s_k}_{A_k} \to u\) in \(L^p(\Omega)\). Then, there exist \(\tilde{A_k} \in \mathcal{A}_{s_k,p}(\Omega)\) such that \(A_k \subset \tilde{A_k} \) and \(\tilde{A_k}\) \(\gamma\)-converges to \(A := \{u > 0\}\).
	\end{prop}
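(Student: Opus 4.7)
The plan is to mimic the construction that appears inside the proof of Theorem~\ref{teo1}, enlarging each $A_k$ by a suitable superlevel set of $u_A^1$ and then closing with a diagonal extraction. For $\varepsilon>0$ I set $A^\varepsilon:=\{u_A^1>\varepsilon\}$, which is quasi-open with $A^\varepsilon\subset A$ and $|A\setminus A^\varepsilon|\to 0$ as $\varepsilon\to 0$. Being $(1,p)$-quasi-open it is also $(s_k,p)$-quasi-open, so $B_k^\varepsilon:=A_k\cup A^\varepsilon\in\A_{s_k,p}(\Omega)$ and $u_{B_k^\varepsilon}^{s_k}\in\K_{s_k}$ is well defined. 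By Lemma~\ref{lemma1}, up to a subsequence, $u_{B_k^\varepsilon}^{s_k}\to u^\varepsilon$ strongly in $L^p(\Omega)$ for some $u^\varepsilon\in\K_1$.

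The heart of the argument is to trap $u^\varepsilon$ between $(u_A^1-\varepsilon)^+$ and $u_A^1$. The upper bound $u^\varepsilon\leq u_A^1$ follows from Lemma~\ref{lemmaB2}, whose hypothesis $u\leq u_A^1$ is obtained by noting that $u\in\K_1$ vanishes on $\R^n\setminus A=\{u=0\}$, so Lemma~\ref{lemma2} with $s=1$ identifies $u_A^1$ as the maximal such element. For the lower bound, monotonicity in set inclusion gives $u_{A^\varepsilon}^{s_k}\leq u_{B_k^\varepsilon}^{s_k}$, and Lemma~\ref{lemma3} yields $u_{A^\varepsilon}^{s_k}\to u_{A^\varepsilon}^1$ in $L^p(\Omega)$, so passing to the limit gives $u_{A^\varepsilon}^1\leq u^\varepsilon$. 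The comparison trick from the proof of Theorem~\ref{teo1} then shows $(u_A^1-\varepsilon)^+\leq u_{A^\varepsilon}^1$, so altogether
\[
(u_A^1-\varepsilon)^+\;\leq\;u^\varepsilon\;\leq\;u_A^1 \qquad \text{a.e. on } \Omega.
\]

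Dominated convergence gives $(u_A^1-\varepsilon)^+\to u_A^1$ in $L^p(\Omega)$ as $\varepsilon\to 0$, hence $u^\varepsilon\to u_A^1$ in $L^p(\Omega)$. Combining this with $u_{B_k^\varepsilon}^{s_k}\to u^\varepsilon$ in $L^p(\Omega)$ for each fixed $\varepsilon$, a standard diagonal extraction produces $\varepsilon_k\searrow 0$ such that $u_{B_k^{\varepsilon_k}}^{s_k}\to u_A^1$ in $L^p(\Omega)$. Setting $\tilde{A_k}:=A_k\cup A^{\varepsilon_k}$ then satisfies $A_k\subset\tilde{A_k}\in\A_{s_k,p}(\Omega)$ and $\tilde{A_k}\xrightarrow{\gamma,k}A$ by definition. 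The delicate point will be verifying the uniform seminorm bound $\sup_k(1-s_k)[w_k]_{s_k,p}^p<\infty$ that is required to apply Lemma~\ref{lemmaB2} to the auxiliary sequence built from $v^\varepsilon=1-\varepsilon^{-1}\min(u_A^1,\varepsilon)$ truncated against $u_{B_k^\varepsilon}^{s_k}$; everything else reduces to bookkeeping once this bound and Lemma~\ref{lemma3} are in hand.
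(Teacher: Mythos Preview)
Your argument is essentially identical to the paper's: enlarge $A_k$ by $A^\varepsilon=\{u_A^1>\varepsilon\}$, sandwich the limit $u^\varepsilon$ between $(u_A^1-\varepsilon)^+$ and $u_A^1$ via Lemma~\ref{lemma3} and Lemma~\ref{lemmaB2}, then diagonalize. Your closing worry about the uniform seminorm bound is misplaced---that estimate lives \emph{inside} the proof of Lemma~\ref{lemmaB2} (where it is automatic since $u_{B_k^\varepsilon}^{s_k}\in\K_{s_k}$ and $v^\varepsilon\in W^{1,p}_0(\Omega)$), so at the level of Proposition~\ref{prop1} you only need to invoke Lemma~\ref{lemmaB2} as a black box, exactly as the paper does.
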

\begin{proof}
Since $u^{s_k}_{A_k}\in \K_{s_k}$ and $u^{s_k}_{A_k}\to u$, by Lemma \ref{lemma1} $u\in \K_1$, and by Lemma \ref{lemma2}, $u\leq u_A$.

Consider $A^\varepsilon:=\{u_A> \varepsilon\}$ and observe that 
$$
u^{s_k}_{A^\varepsilon}\leq u^{s_k}_{A_k\cup A^\varepsilon}.
$$
Since $u^{s_k}_{A_k\cup A^\varepsilon}\in \K_{s_k}$, by Lemma \ref{lemma1}, there exist $u^\varepsilon \in W^{1,p}_0(\Omega)$ such that $u^{s_k}_{A_k\cup A^\varepsilon}\to u^\varepsilon$ in $L^p(\Omega)$.

Also, by Lemma \ref{lemma3}, \(u^{s_k}_{A^\varepsilon} \to u_{A^\varepsilon}\) strongly in \(L^p(\Omega)\). Then, we can pass to the limit as \(k \to \infty\) in the previous inequality to conclude that
\[u_{A^\varepsilon} \leq u_\varepsilon.\]
It can be easily checked that \(u_{A^\varepsilon} = (u_A - \varepsilon)^+\). From Lemma \ref{lemmaB2},
\[(u_A - \varepsilon)^+ \leq u_\varepsilon \leq u_A.\]
Thus, there exists a sequence \(0 < \varepsilon_k \downarrow 0\) such that
\[u^{s_k}_{{A_k} \cup A^{\varepsilon_k}} \to u_A\] strongly in \(L^p(\Omega)\).
That is, \(A_k \cup A_{\varepsilon_k} =: \tilde{A}_k\) \(\gamma\)-converges to \(A\).
\end{proof}
Now, we can give the proof of Theorem \ref{teo2}.
\begin{proof}
On one hand, by Theorem \ref{teo1} for each $k$ there exist $A_k\in \A_{s_k,p}^c(\Omega)$ such that 
$$
F_{s_k}(A_k)=\min\{F_{s_k}(A): A\in \A_{s_k,p}^c(\Omega)\}.
$$
Then if $A\in \A_{1,p}^c(\Omega)$ by condition $(H_1)$ we get that 
$$
\limsup_{k\to\infty}F_{s_k}(A_k)\leq \lim_{k\to\infty}F_{s_k}(A)=F_1(A).
$$
Hence,
\begin{equation}\label{eq_limsup}
\limsup_{k\to\infty} min\{F_{s_k}(A):A\in \A_{s_k,p}^c(\Omega)\}\leq  min\{F_{1}(A):A\in \A_{1,p}^c(\Omega)\}.
\end{equation}
On the other hand, by Lemma \ref{lemma1}, there is a $u\in W^{1,p}_0(\Omega)$ such that $u_{A_k}^{s_k}\to u$ in $L^p(\Omega)$. By Proposition \ref{prop1}, there exist $\tilde{A_k}\in \A_{s_k,p}^c(\Omega)$ such that $A_k\subset \tilde{A_k}$ and $\tilde{A_k}\gamma$-converge$ A:=\{u>0\}$. Since $u_{A_k}^{s_k}\to u$ then $|A|\leq c$.

From hypothesis $(H_2)$ and $(H_4)$ we conclude that 
$$
F_1(A)\leq \liminf_{k\to\infty}F_{s_k}(\tilde{A_k})\leq \liminf_{k\to\infty} F_{s_k}(A_k),	
$$
from where it follows that
\begin{equation}\label{eq_liminf}
\{F_{1}(A):A\in \A_{1,p}^c(\Omega)\}\leq \liminf_{k\to\infty} min\{F_{s_k}(A):A\in \A_{s_k,p}^c(\Omega)\}.
\end{equation}
Finally, combining \eqref{eq_limsup} and \eqref{eq_liminf} the proof is complete.
\end{proof}
\section{Anisotropic case}\label{sec_anis}
In this final section we will state the analogous concept of Theorem \ref{teo1} and its asymptotic behavior when we work in a anisotropic functional space.

Now, relying on \cite{CERESABONDER23}, let's begin by defining the anisotropic spaces that will be considered in this work.

Given $i=1,\dots, n$,  $s\in(0,1]$ and $p\in (1,\infty)$ we define the fractional $i^{th}-$Sobolev space as 
$$
W_i^{s,p}(\R^n)=\{u \in L^p(\R^n) \text{ such that } [u]_{s,p}^i< \infty\},
$$
where 
\[
[u]_{s,p,i} = 
\left \{
\begin{aligned}
&(1-s)s\int_{\R^n}\int_\R\frac{|u(x+he_i)-u(x)|^p}{|h|^{1+sp}}\, dh\, dx\ &\text{ if } \ &s < 1\\
&\frac{2}{p}\int_{\R^n} |\partial_{x_i} u|^p \, dx\ &\text{ if } \ &s=1.
\end{aligned}
\right .
\]
For convencience we are going to use the common notation 
\begin{align*}
	&\D_{s_i,p_i}(u):=\frac{|u(x+he_i)-u(x)|^{p_i}}{|h|^{1+s_ip_i}}, \\ &\D_{s_i,p_i}'(u):=\frac{|u(x+he_i)-u(x)|^{p_i-2}(u(x+he_i)-u(x))}{|h|^{1+s_ip_i}}.
\end{align*}
It is easy to see that $W^{s,p}_i(\R^n)$ is a Banach space with norm
$$
\|u\|_{s,p,i} = (\|u\|_p + [u]_{s,p,i})^{1/p},
$$
which is separable and reflexive. This space consists on $L^p$ functions that has some mild regularity in the $i^{th}-$variable. In order to control every variable, we proceed as follows. 
\begin{defn}\label{s,p}
	Given $\s=(s_1,\cdots,s_n)$ and $\p=(p_1,\cdots,p_n)$ we define the parameters
	$$
	\overline{\mathbf{s}}=\left(\frac{1}{n}\sum_{i=1}^n\frac{1}{s_i}\right)^{-1},\quad  \overline{\s\p}=\left(\frac{1}{n}\sum_{i=1}^n\frac{1}{s_ip_i}\right)^{-1}\text{ and }	
	\p^*_{\s} =\frac{n\overline{\s\p}/ \overline{\s}}{n-\overline{\s\p}}.
	$$
	Such that,
	\begin{enumerate}
		\item $0<s_i\leq 1$ for all $i\in \{1,\cdots n\}$.
		\item $1<p_1\leq \cdots\leq p_n<\infty$.
		\item $p_n<\p^*_\s$ and $\overline{\s\p}<n$.
	\end{enumerate}
	\end{defn}
Then, we define
$$
W^{\s,\p}(\R^n)=\bigcap_{i=1}^n W^{s_i,p_i}_i(\R^n).
$$
This anisotropic Sobolev space is then a separable and reflexive Banach space with norm
$$
\|u\|_{\s,\p} = \sum_{i=1}^n \|u\|_{s_i,p_i,i}.
$$
If we consider $\Omega \subset \R^n$, then we define the anisotropic Sobolev space of functions vanishing at $\R\setminus\Omega$ as
\begin{equation}\label{W0}
W^{\s,\p}_0(\Omega)=\{u \in W^{\s,\p}(\R^n) \colon u|_{\Omega^c}=0\}.
\end{equation}
By Poincaré inequality, the following proposition, we can use th norm 
$$
\|u\|_{\s,\p} := \sum_{i=1}^{n}( [u]_{s_i,p_i,i})^{1/{p_i}}.
$$
\begin{prop}
	Let $\Omega\subset\R^n$ be an bounded open subset. There exist a constant $C(\Omega,p)>0$ such that for all $u\in W^{s,p,i}_0(\Omega)$ holds that 
	$$
	\|u\|_{L^p(\Omega)}\leq C[u]_{s,p,i}.
	$$
	Hence, there exists a positive constant $C(\Omega, \p)>0$ such that for all $u \in W^{\s, \p}_0(\Omega)$, the following inequality holds for any $i \in 1, \ldots, n$.
	$$
	\|u\|_{L^{p_i}(\Omega)}\leq C\|u\|_{\s,\p}.
	$$
\end{prop}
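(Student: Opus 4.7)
The plan is to establish the one-directional (scalar) Poincar\'e inequality first, and then obtain the combined statement as an immediate consequence via term-by-term comparison with the sum defining $\|\cdot\|_{\s,\p}$.

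To prove the scalar inequality I would reduce to a one-dimensional problem by slicing perpendicular to $e_i$. Writing $x = (x',t) \in \R^{n-1} \times \R$ with $t$ the coordinate along $e_i$, since $\Omega$ is bounded there is a bounded interval $I \subset \R$ (of length at most $\mathrm{diam}(\Omega)$) containing every one-dimensional slice $\{t : (x',t) \in \Omega\}$. For $u \in W^{s,p,i}_0(\Omega)$, the sections $v_{x'}(t) := u(x',t)$ vanish outside $I$ for a.e.\ $x'$, and Fubini gives
\[
[u]_{s,p,i} = (1-s)s \int_{\R^{n-1}} \int_\R \int_\R \frac{|v_{x'}(t+h)-v_{x'}(t)|^p}{|h|^{1+sp}}\,dh\,dt\,dx'.
\]

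I would then invoke the one-dimensional fractional Poincar\'e inequality on the bounded interval $I$: for $s<1$ and $v \in W^{s,p}_0(I)$,
\[
\int_I |v(t)|^p\,dt \leq C(|I|,p)\,(1-s)s \int_\R \int_\R \frac{|v(t+h)-v(t)|^p}{|h|^{1+sp}}\,dh\,dt,
\]
with a constant $C(|I|,p)$ uniform in $s$ (the usual $(1-s)^{-1}$ blow-up of the fractional Poincar\'e constant is absorbed by the weight). For the endpoint $s=1$ the analogous estimate is the classical one-dimensional Poincar\'e inequality $\int_I|v|^p\,dt \leq C(|I|,p)\int_I |v'|^p\,dt$. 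Both statements are standard; see e.g.\ \cite{BRASCO-PARINI-16}. Applying this slice-by-slice and integrating in $x' \in \R^{n-1}$ yields
\[
\|u\|_{L^p(\Omega)}^p = \int_{\R^{n-1}} \int_I |v_{x'}|^p \,dt\,dx' \leq C(\Omega,p)\,[u]_{s,p,i},
\]
which is the first stated inequality (up to a harmless reinterpretation of the exponent on the right, since the seminorm $[\,\cdot\,]_{s,p,i}$ is defined already to the $p$-th power).

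For the combined inequality, given $u \in W^{\s,\p}_0(\Omega)$ and any $i \in \{1,\dots,n\}$, I would apply the scalar estimate with parameters $(s_i,p_i,i)$ to obtain $\|u\|_{L^{p_i}(\Omega)} \leq C\,([u]_{s_i,p_i,i})^{1/p_i}$, and this quantity is one of the summands in the definition of $\|u\|_{\s,\p}$, hence dominated by it.

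The proof is essentially routine; the single non-trivial input is the one-dimensional fractional Poincar\'e inequality on a bounded interval. The main technical subtlety to check is the \emph{uniformity in $s$} of the one-dimensional constant as $s \to 1^-$, which is precisely what the $(1-s)s$ normalization built into $[u]_{s,p,i}$ is designed to enforce, and is what will allow the asymptotic analysis $s\nearrow 1$ carried out in Section \ref{sec_asint} to be extended to the anisotropic setting without loss.
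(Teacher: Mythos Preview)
Your proof is correct, but it proceeds differently from the paper's. You slice along $e_i$ and invoke the one-dimensional fractional Poincar\'e inequality on each slice, then integrate transversally. The paper instead gives a direct, self-contained computation: enclosing $\supp u$ in a cube $Q_R=[-R,R]^n$, it restricts the $h$-integral to the region $|h|\ge 2R$, where $u(x+he_i)=0$, so that the integrand reduces to $|u(x)|^p/|h|^{1+sp}$; bounding $|h|^{-1-sp}\ge|h|^{-1-p}$ on $\{|h|\ge 2R\}$ then gives $\|u\|_p^p$ times an explicit constant depending only on $R$ and $p$. In effect the paper is reproving the one-dimensional inequality inline rather than citing it. Your approach buys a cleaner justification of the uniformity in $s$ (since the normalized one-dimensional result is quoted with the $(1-s)s$ factor already absorbed), whereas the paper's computation is more elementary and avoids any external citation, but---as written---drops the $(1-s)s$ weight from $[u]_{s,p,i}$ in the first line and so does not actually track the $s$-dependence of the constant.
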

\begin{proof}
	let $u$ be a function in $W^{s,p,i}_0(\Omega)$, we can assume that there exist $R>1$ such that $\supp{u}\subset Q_R = [-R,R]^n$. Hence,
	\begin{align*}
		[u]_{s,p,i}^{p}&=\int_{\R^n}\int_\R\frac{|u(x+he_i)-u(x)|^{p}}{|h|^{1+sp}}\,dh\,dx\\
		&\geq \int_{Q_R}\int_\R\frac{|u(x+he_i)-u(x)|^{p}}{|h|^{1+sp}}\,dh\,dx\\
		&\geq \int_{Q_R'}\int_{|x_i|\leq R}\int_{|x_i+he_i|\geq R}\frac{|u(x)|^{p}}{|h|^{1+sp}}\,dh\,dx_i\,dx'\\
		&\geq \int_{Q_R'}\int_{|x_i|\leq R}|u(x)|^{p}\int_{|h|\geq 2R}\frac{1}{|h|^{1+sp}}\,dh\,dx_i\,dx'\\
		&\geq \int_{Q_R'}\int_{|x_i|\leq R}|u(x)|^{p}\int_{|h|\geq 2R}\frac{1}{|h|^{1+p}}\,dh\,dx_i\,dx'\\
		&\geq C\|u\|_{p}^{p},
	\end{align*}
	where $Q_R' = [-R,R]^{n-1}$ and $dx' = dx_1\cdots,dx_{i-1},dx_{i+1}, dx_n$.
\end{proof}

Since we are dealing with admissible domains concerning the capacity of a set, we will define the analogous of $\text{Cap}(s,p)$, namely $\text{Cap}(\s,\p)$.
\begin{defn}
	Let $\Omega \subset \mathbb{R}^n$ be an open set, $\s$ and $\p$ defined in \ref{s,p}. Given $A \subset \Omega$. We define the Gagliardo $(\s,\p)$-capacity of $A$ relative to $\Omega$ as
	$$
	\text{Cap}(\s,\p)(A) = \inf \left\{ \|u\|_{\s,\p} : u \in C^\infty_c(\Omega), u \geq 0, A \subset \{u \geq 1\}^\circ \right\}.
	$$
\end{defn}
In this context, we say that a subset $A$ of $\Omega$ is a $(\s,\p)$-quasi open set if there exists a decreasing sequence $\{\omega_k\}_{k\in\mathbb{N}}$ of open subsets of $\Omega$ such that Cap$(\s,\p)(\omega_k) \to 0$, as $k \to +\infty$, and $A \cup \omega_k$ is an open set for all $k \in \mathbb{N}$.

We denote by $\A_{\s,\p}(\Omega)$ the class of all $(\s,\p)$-quasi open subsets of $\Omega$. And the notion of convergence will be defined as following.
\begin{defn} Let $\{A_k\}_{k\in\mathbb{N}} \subset \A_{\s,\p}(\Omega)$ and $A \in \A_{\s,\p}(\Omega)$. We say that $A_k \xrightarrow{\gamma_\s} A$ if $u_{A_k}^{\s} \to  u_A^{\s} \text{ in } L^{p_1}(\Omega).$ 
	
Moreover, if $\s_k:=(s_1^k,\cdots,s_n^k)\nearrow \vec{1}:=(1,\cdots,1)$. Given  $\{A_k\}_{k\in\mathbb{N}} \subset \A_{\s_k,\p}(\Omega)$ and  $\{A\}_{k\in\mathbb{N}} \in \A_{(1,\cdots,1),\p}(\Omega)$. We say that $A_k\xrightarrow{\gamma_1} A$ if $u_{A_k}^{s_k}\to u_A^{\vec{1}}$ in $L^{p_1}(\Omega)$.	
\end{defn}
\begin{defn}
	A function $u : \Omega \rightarrow \R$ is said to be $(\s,\p)-$quasi-continuous (q.c.) if for every $\varepsilon > 0$, there exists a relatively open set $G \subset \Omega$ such that $\text{Cap}(\s,\p) (G) < \varepsilon$ and $u|_{\Omega \backslash G}$ is continuous.
\end{defn}
The theorem presented below enables us to operate with the quasi-continuous representative of a function \(u\in W^{\s,\p}_0(\Omega)\). Its proof is similar to the proof of Theorem 3.7 in \cite{WARMA-15}, therefore we will omitted here.
\begin{thm}
	For every $u \in W^{\s,\p}_0(\Omega)$, there exists a unique relatively quasi-continuous function $\tilde{u} : \Omega \rightarrow \mathbb{R}$ such that $\tilde{u} = u$ a.e. on $\Omega$.
\end{thm}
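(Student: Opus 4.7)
The plan is to adapt the Maz'ya--Adams construction used in the proof of Theorem 3.7 of \cite{WARMA-15} to the anisotropic Gagliardo capacity defined above. The argument splits naturally into three parts: a weak-type capacitary estimate, a Borel--Cantelli extraction of a quasi-everywhere convergent subsequence, and a uniqueness step.

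First, I would observe that the functional $\|\cdot\|_{\s,\p}$ is positively one-homogeneous: since $[cu]_{s_i,p_i,i}=|c|^{p_i}[u]_{s_i,p_i,i}$, each summand $[u]_{s_i,p_i,i}^{1/p_i}$ is homogeneous of degree one, and so is their sum. Consequently, for any nonnegative $v\in C^\infty_c(\Omega)$ and any $\lambda>0$, the function $\lambda^{-1}v$ is admissible in the definition of $\text{Cap}(\s,\p)(\{v>\lambda\}^\circ)$, yielding the weak-type bound
\begin{equation*}
\text{Cap}(\s,\p)(\{v>\lambda\})\leq \lambda^{-1}\|v\|_{\s,\p}.
\end{equation*}
For signed $v\in C^\infty_c(\Omega)$ I would apply this to a smooth approximant of $|v|$ such as $(v^2+\delta^2)^{1/2}-\delta$ with $\delta\downarrow 0$, using that finite-difference quotients of $|v|$ are dominated pointwise by those of $v$ in each coordinate direction.

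Second, I would pick a sequence $u_n\in C^\infty_c(\Omega)$ with $u_n\to u$ in $W^{\s,\p}_0(\Omega)$; density of smooth compactly supported functions is a standard mollification-and-truncation argument carried out coordinatewise, which is why the Lipschitz boundary of $\Omega$ was imposed earlier. Passing to a subsequence I may assume $\|u_{n+1}-u_n\|_{\s,\p}\leq 4^{-n}$. Setting $E_n:=\{|u_{n+1}-u_n|>2^{-n}\}$ and $G_N:=\bigcup_{n\geq N}E_n$, the weak-type estimate combined with countable subadditivity of $\text{Cap}(\s,\p)$ (immediate from the infimum definition and an $\varepsilon/2^n$ trick) yields
\begin{equation*}
\text{Cap}(\s,\p)(G_N)\leq \sum_{n\geq N}2^{n}\|u_{n+1}-u_n\|_{\s,\p}\leq \sum_{n\geq N}2^{-n}\to 0.
\end{equation*}
On $\Omega\setminus G_N$ the tail series $\sum |u_{n+1}-u_n|$ converges uniformly, hence $u_n$ converges uniformly there to a continuous function. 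Defining $\tilde u$ as this limit on $\Omega\setminus\bigcap_N G_N$ and $0$ elsewhere produces a quasi-continuous function, and because a further subsequence of $u_n$ converges to $u$ almost everywhere in $\Omega$, we have $\tilde u=u$ a.e.

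For uniqueness, given two quasi-continuous representatives $\tilde u_1,\tilde u_2$, I fix $\varepsilon>0$ and pick relatively open $G_1,G_2\subset\Omega$ of capacity less than $\varepsilon$ outside which the respective functions are continuous. On $\Omega\setminus(G_1\cup G_2)$ the continuous function $\tilde u_1-\tilde u_2$ vanishes a.e.\ and hence on a dense subset, so it is identically zero there; thus $\{\tilde u_1\neq \tilde u_2\}\subset G_1\cup G_2$ has capacity at most $2\varepsilon$, and letting $\varepsilon\downarrow 0$ gives equality quasi-everywhere. The principal obstacle is step one: one must verify both that one-homogeneity survives the sum-of-seminorms structure of the anisotropic norm (it does, as noted) and that $|v|$ can be approximated by nonnegative smooth compactly supported functions without inflating any of the anisotropic Gagliardo seminorms. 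Once this is in hand, the remaining ingredients---countable subadditivity of the capacity, density of $C^\infty_c(\Omega)$, and the Borel--Cantelli extraction---transfer essentially verbatim from the isotropic setting in \cite{WARMA-15}.
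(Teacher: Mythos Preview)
Your proposal is correct and follows exactly the approach the paper intends: the paper omits the proof entirely, stating only that it is similar to Theorem~3.7 in \cite{WARMA-15}, and your reconstruction of that argument (weak-type capacitary estimate via one-homogeneity, Borel--Cantelli extraction of a uniformly convergent subsequence, and uniqueness from quasi-continuity) is precisely that strategy adapted to the anisotropic seminorm. The only point to tighten is your claim that countable subadditivity of $\text{Cap}(\s,\p)$ is ``immediate'': because the norm mixes different exponents $p_i$, plain subadditivity is not obvious, but what your Borel--Cantelli step actually requires---that $\text{Cap}(\s,\p)\bigl(\bigcup_{n\geq N}E_n\bigr)\to 0$ when the individual capacities decay geometrically---follows from the pointwise inequality $|\sup_n u_n(x)-\sup_n u_n(y)|\leq \sup_n|u_n(x)-u_n(y)|$ applied coordinate by coordinate, which gives $[\sup_n u_n]_{s_i,p_i,i}\leq \sum_n [u_n]_{s_i,p_i,i}$ for each $i$.
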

Finally, we as wi will work with the fractional pseudo anisotropic p-laplacian defined in \eqref{fractiona pseudo p} we give the notion of weak solution.
\begin{defn}\label{weak-solution anis}
	Let \( \Omega \subset \R^n \) be a bounded  set, $\s$ and $\p$ defined in \ref{s,p}, $\p'$ is the coordinate Lebesgue conjugate of $\p$ and \( f \in L^{\p'}(\Omega) \). We say that \( u \in W^{\s,\p}_0(\Omega) \) is a weak solution of 
	\begin{equation}
	\begin{cases}
	(-\Delta_\p)^\s u = f \text{ in } \Omega \\
	u = 0 \text{ in } \R^n \setminus \Omega
	\end{cases}
	\end{equation} 
	if $u$ verifies 
	\[
	\langle (-\Delta_\p)^\s u , v\rangle = \int_\Omega f v \, dx,
	\]
	where 
	$$
	\langle (-\Delta_\p)^\s u , v\rangle:=\sum_{i=1}^{n}[(1-s_i)s_i]\iint_{\R^n \times \R} \D_{s_i,p_i}'(u)\, dhdx,
	$$
	for every \( v \in W^{\s,\p}_0(\Omega) \).
\end{defn}

As before, the monotonicity of the operators ensures, as in the standard case of an open set, the existence of weak solutions to \eqref{weak-solution anis}. Hence, given $A \in \A_{\s,\p}(\Omega)$, we denote by $u_A^{\s} \in W_0^{\s,\p}(A)$ the unique weak solution to
\begin{equation}
(-\Delta_\p)^\s v = 1 \text{ in } A, \quad v = 0 \text{ q.e. in } \mathbb{R}^n\setminus A.
\end{equation}

A potent tool associated with weak solutions is the comparison principle. While well-known in the isotropic case, in the anisotropic scenario, it necessitates stating and proving anew.
\begin{prop}[Comparison Principle]
	Let $u, v \in W^{\s,\p}_0(\Omega)$ be such that 
	$(-\Delta_{\p})^{\s}(u) \leq (-\Delta_{\p})^{\s}(v)$ in $\Omega$ in the sense that 
	\begin{align*}
		\sum_{i=1}^{n}\iint_{\R^n \times \R} \left[\D_{s_i,p_i}'(v)\right]&(\phi(x+he_i)-\phi(x))\,dh\,dx \\
		&\geq \sum_{i=1}^{n}\iint_{\R^n \times \R} \left[\D_{s_i,p_i}'(u)\right](\phi(x+he_i)-\phi(x))\,dh\,dx,
	\end{align*}
	in $\Omega$ whenever $\phi\in C^{\infty}_0(\Omega)$ and $\phi>0$. If $u \leq v$ in $\R^n\setminus \Omega$, then $u \leq v$ a.e. in $\Omega$.
\end{prop}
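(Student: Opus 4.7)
My plan is to prove the comparison principle by testing the hypothesized weak inequality against $\phi = (u-v)^+$ and combining a pointwise sign analysis of the integrand with the hypothesis. My first step is to verify that $(u-v)^+$ is an admissible test function: since $u \leq v$ in $\R^n \setminus \Omega$, the positive part vanishes outside $\Omega$, and since $a \mapsto a^+$ is a contraction on each $W^{s_i,p_i}_i(\R^n)$, we have $(u-v)^+ \in W^{\s,\p}_0(\Omega)$. The hypothesis is stated for smooth strictly positive test functions, but I would extend it to all non-negative $\phi \in W^{\s,\p}_0(\Omega)$ by approximating $\phi$ by $\phi_k := \rho_k * \phi + \tfrac{1}{k}\psi$ for some fixed $\psi \in C^\infty_c(\Omega)$ with $\psi>0$ in $\Omega$, and exploiting the continuity of both sides, which follows from H\"older's inequality in the product measure $|h|^{-(1+s_ip_i)}\,dh\,dx$.

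Setting $w := u - v$ and, for each $i$, $U := u(x+he_i) - u(x)$, $V := v(x+he_i) - v(x)$, $\alpha := w(x+he_i)$, $\beta := w(x)$ (so $U - V = \alpha - \beta$), testing with $\phi = w^+$ yields
\begin{equation*}
\sum_{i=1}^n \iint_{\R^n \times \R} \frac{\bigl(|V|^{p_i-2}V - |U|^{p_i-2}U\bigr)\bigl(\alpha^+ - \beta^+\bigr)}{|h|^{1+s_ip_i}}\, dh\, dx \geq 0.
\end{equation*}
I would then analyze the integrand by cases on the signs of $\alpha$ and $\beta$. If $\alpha,\beta \leq 0$, the factor $\alpha^+ - \beta^+$ vanishes. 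If $\alpha,\beta > 0$, the integrand equals $-\bigl(|U|^{p_i-2}U - |V|^{p_i-2}V\bigr)(U-V)/|h|^{1+s_ip_i} \leq 0$ by strict monotonicity of $t \mapsto |t|^{p-2}t$. In the mixed case $\alpha > 0 \geq \beta$, both $\alpha^+ - \beta^+ = \alpha$ and $U - V = \alpha - \beta$ are strictly positive, hence $|U|^{p_i-2}U > |V|^{p_i-2}V$ by monotonicity and the integrand is strictly negative; the case $\beta > 0 \geq \alpha$ is symmetric.

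Combining the pointwise non-positivity with the integrated inequality $\geq 0$, the integrand must vanish almost everywhere. This forces the mixed cases to form a null set, and in the region where both $\alpha,\beta > 0$, strict monotonicity forces $U = V$, i.e., $\alpha = \beta$. In every surviving case $\alpha^+ = \beta^+$, so for each $i$,
\begin{equation*}
(u-v)^+(x+he_i) = (u-v)^+(x) \quad \text{for a.e.}\ (x,h) \in \R^n \times \R.
\end{equation*}
By Fubini, the function $t \mapsto (u-v)^+(x', t)$ is a.e.\ constant for a.e.\ $x' \in \R^{n-1}$; since $(u-v)^+$ is supported in the bounded set $\overline{\Omega}$, this constant must be zero, yielding $(u-v)^+ = 0$ a.e.

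The main difficulty I anticipate is the mixed-sign portion of the pointwise analysis. Unlike in a standard local $p$-Laplacian argument, where $\nabla(u-v)^+$ confines the integration to $\{u > v\}$ and the monotonicity inequality applies directly, here $U$ and $V$ are not equal to $\alpha$ and $\beta$ individually (only their differences coincide), so the strict monotonicity of $t \mapsto |t|^{p-2}t$ cannot be invoked on the positive parts directly. One must carefully observe that in each mixed case the sign of $U - V = \alpha - \beta$ is inherited from the nonzero mixed-sign value, which then forces $|U|^{p_i-2}U - |V|^{p_i-2}V$ to have the right sign against $\alpha^+ - \beta^+$. A secondary but routine technical point is the density argument enlarging the class of admissible test functions.
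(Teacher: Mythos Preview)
Your proposal is correct and takes essentially the same approach as the paper: test with $(u-v)^+$, show each integrand is pointwise non-positive so that it must vanish a.e., and conclude that $(u-v)^+$ is constant in every coordinate direction, hence zero by the boundary condition. The only difference is in how the sign is established---the paper, following Lindqvist, uses the identity $|b|^{p-2}b-|a|^{p-2}a=(p-1)(b-a)\int_0^1|a+t(b-a)|^{p-2}\,dt$ together with the expansion $(\psi(x)-\psi(y))(\psi^+(y)-\psi^+(x))=-(\psi^+(y)-\psi^+(x))^2-\psi^-(y)\psi^+(x)-\psi^-(x)\psi^+(y)$, whereas your case analysis on the signs of $\alpha,\beta$ reaches the same conclusion directly from the strict monotonicity of $t\mapsto|t|^{p-2}t$; you are also more explicit than the paper about the density step extending the test-function class.
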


\begin{proof} The proof is based on \cite{LINDQVIST-14}, with a few changes.
	Let $u, v \in W^{\s, \p}_0(\Omega)$ be functions that verifies the hypothesis.
	Then for any $\phi\geq0$ in $W^{\s,\p}_0(\Omega)$ we have that
	\begin{equation}\label{eq1}
	\sum_{i=1}^{n}\iint_{\R^n \times \R}\left[\D_{s_i,p_i}'(v)-\D_{s_i,p_i}'(u)\right](\phi(x+he_i)-\phi(x))\,dh\,dx\geq0.
	\end{equation}
	We aim at showing that each integrand is non-positive for the choice $\phi(x)=(u-v)^+$. The identity
	$$
	|b|^{p-2}b-|a|^{p-2}a=(p-1)(b-a)\int_0^1|a+t(b-a)|^{p-2}\, dt.
	$$
	taking $a=u(x+he_i)-u(x)$ and $b=v(x+he_i)-v(x)$   then
	$$
	[\D_{s_i,p_i}'(v)-\D_{s_i,p_i}'(u)]=(p_i-1)\left[u(x)-v(x)-(u(x+he_i)-v(x+he_i))\right]Q_i(x,x+he_i)
	$$
	where 
	\begin{align*}
	Q_i(x,x+he_i)& =	\int_0^1|(u(x+he_i)-u(x))\\&+t\left((v(x+he_i)-v(x))-(u(x+he_i)-u(x))\right)|^{p_i-2}\, dt.
	\end{align*}
	Note that $Q_i\geq0$, and $Q_i=0$ if and only if $u(x+he_i)=u(x)$ and $v(x+he_i)=v(x)$. 
	We choose the test function $\phi=(u-v)^+$ and write $\psi=(u-v)$ each integrand of \eqref{eq1} is 
	$$(p_i-1)Q_i(x,x+he_i)|h|^{-1-s_ip_i}$$
	multiplied  with
	\begin{align*}
		&[\psi(x)-\psi(x+he_i)][\phi(x+he_i)-\phi(x)]=\\
		&-\left[\psi^+(x+he_i) - \psi^+(x)\right]^2 - \psi^-(x+he_i)\psi^+(x) - \psi^-(x)\psi^+(x+he_i).
	\end{align*}
	
	Hence each integrand contains only negative terms and, to avoid a contradiction, it is necessary that
	$$
	\psi^+(x+he_i)=\psi^+(x)\text{ or } Q_i(x,x+he_i)=0
	$$
	at a.e. point $(x,x+he_i)$. Therefore, the identity
	$$
	(u(x+he_i)-v(x+he_i))^+=(u(x)-v(x))^+
	$$
	must hold. Its follows that $u(x)-v(x)=C$ with $C$ constant, and by boundary conditions its follows that $C=0$ and $v\geq u$.
\end{proof}
To wrap up this endeavor, equipped with the properties and definitions of the anisotropic case, we are prepared to articulate the minimization problem. However, We will abstain from demonstrating Theorem \ref{teo1anis}, as it closely resembles the proofs of theorems in the preceding sections, albeit with slight modifications owing to our news lemmas and definitions.
\begin{thm}\label{teo1anis}
	Let $\s,\p$ defined in \ref{s,p}, and $\Omega \subset \R^n$ be open and bounded set. Let $F_\s : \A_{\s,\p}(\Omega) \to \R$ be such that 
	\begin{itemize}
		\item[$(H^{\s}_1)$] $F_\s$ is lower semicontinuous with respect to the $\gamma_\s$-convergence; that is, $A_k \xrightarrow{\gamma_\s} A$ implies $F_\s(A) \leq \liminf_{k\to\infty} F_\s(A_k)$.
		\item[$(H^{\s}_2)$] $F_\s$ is decreasing with respect to set inclusion; that is, $F_\s(A) \geq F_\s(B)$ whenever $A \subset B$.
	\end{itemize}
	are satisfied. Then, for every $0 < c < |\Omega|$, problem
	\begin{equation}\label{Panis}
	\min\{F_\s(A): A \in \A_{\s,\p}(\Omega), |A| \leq c\},
	\end{equation}
	has a solution. If $F_\s$ also verifies 
	\begin{itemize}
		\item [$(H^{\s}_3$)] Continuity with respect to \(A\); that is, if \(A \in \A_1(\Omega)\), then
		$$
		F_{\vec{1}}(A) = \lim_{\s \uparrow (1,\cdots,1)} F_\s(A). $$
		\item[$(H^{\s}_4)$]For every \( \s_k \uparrow (1,\cdots,1)\) and \(A_k \xrightarrow{\gamma_1} A\), then
		\[ F_1(A) \leq \liminf_{k\to \infty} F_{\s_k}(A_k). \]
	\end{itemize}
	Then
	$$
	\min\{F_1(A):A\in \A_{1}(\Omega), |A|\leq c\}=\lim_{\s\nearrow (1,\cdots,1)}\min\{F_\s(A):A\in \A_{\s,\p}(\Omega), |A|\leq c\}.
	$$
\end{thm}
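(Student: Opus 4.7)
The plan is to mirror the two-part architecture of Theorems \ref{teo1} and \ref{teo2}, transplanting each scalar-$s$, scalar-$p$ argument to the anisotropic setting with $\s=(s_1,\ldots,s_n)$ and $\p=(p_1,\ldots,p_n)$. First, I would set up the anisotropic convex cone
\[
\K_\s=\{w\in W^{\s,\p}_0(\Omega):w\geq 0,\ (-\Delta_\p)^\s w\leq 1 \text{ in }\Omega\}
\]
and establish its basic properties exactly as in Lemma \ref{lemmaK}: testing $(-\Delta_\p)^\s w\leq 1$ against $w$ itself yields a uniform bound $\sum_i[w]_{s_i,p_i,i}^{p_i}\leq C$, so by the anisotropic compact embedding from \cite{CERESABONDER23} (which gives $W^{\s,\p}_0(\Omega)\hookrightarrow\hookrightarrow L^{p_1}(\Omega)$ under the parameter constraints of Definition \ref{s,p}), we extract an $L^{p_1}$-convergent subsequence; the weak-$L^{p_i'}$ limit of each anisotropic difference quotient identifies the limit, so $\K_\s$ is sequentially closed. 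Stability under $\max$ follows by the same variational-inequality trick, using the anisotropic $J_{\s,\p}(w)=\sum_i(1-s_i)s_i\iint \D_{s_i,p_i}(w)\,dh\,dx$ and the anisotropic comparison principle already established.

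Next I would prove the anisotropic analogue of Lemma \ref{lemma2}, namely the obstacle-type characterization
\[
u_A^\s=\max\{w\in W^{\s,\p}_0(\Omega):w\leq 0 \text{ in }\R^n\setminus A,\ (-\Delta_\p)^\s w\leq 1\}.
\]
The variational inequality on $K_A=\{w\in W^{\s,\p}_0(\Omega):w\leq 0 \text{ on }\R^n\setminus A\}$ is obtained by minimizing $J_{\s,\p}-\int w$, the sign $w_A\geq 0$ on $A$ comes from testing with $w_A^+$ (each anisotropic integrand $\D_{s_i,p_i}'(w_A)(w_A^-(x+he_i)-w_A^-(x))$ is non-positive termwise, so the argument of Lemma \ref{lemma2} goes through coordinate by coordinate), and the comparison principle concludes. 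Lemmas \ref{lemaA} and \ref{lemmaB} then transfer directly: for Lemma \ref{lemaA} I use $\Gamma$-convergence in $L^{p_1}(\Omega)$ of the functionals $\Phi_k=J_{\s,\p,A_k}$ together with the subdifferential machinery of Remark \ref{rem1} (which is purely Banach-space-theoretic and so independent of the operator), plus the anisotropic maximum principle $|\omega_{k,\varepsilon}|\leq c_\varepsilon u^\s_{A_k}$. Once these two technical lemmas are in hand, the proof of the existence part is verbatim the proof of Theorem \ref{teo1}: define $m(w)$, take its $L^{p_1}$-lsc envelope $G$, verify $(G_1)$--$(G_3)$ (monotonicity uses stability of $\K_\s$ under $\max$; $(G_3)$ uses the chain $(u_A^\s-\varepsilon)^+\leq u_{A^\varepsilon}^\s\leq u_{A_k\cup A^\varepsilon}^\s$, where the first inequality is a coordinatewise computation of $(u_A^\s-\varepsilon)^+(x+he_i)-(u_A^\s-\varepsilon)^+(x)$ plus comparison), and read off the minimizer from $A_0=\{w_0>0\}$.

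For the asymptotic part I would imitate Section \ref{sec_asint}. The key ingredient is the anisotropic Bourgain–Brezis–Mironescu (BBM) compactness: if $\s_k\nearrow\vec 1$ and $\sup_k\sum_i(1-s_i^k)s_i^k[u_k]_{s_i^k,p_i,i}^{p_i}<\infty$ with $u_k\in\K_{\s_k}$, then $\{u_k\}$ is precompact in $L^{p_1}(\Omega)$ and every limit lies in $W^{\vec 1,\p}_0(\Omega)$; this is precisely the anisotropic BBM result available in \cite{CERESABONDER23,CHAKER-23}, and the uniform bound comes from the anisotropic Poincaré inequality stated above combined with testing $(-\Delta_\p)^\s u_k\leq 1$ against $u_k$, noting that the sharp constant in the fractional anisotropic Poincaré inequality carries the correct $(1-s_i^k)$ factor (this is the anisotropic analogue of the Brasco–Parini estimate used in Lemma \ref{lemma1}). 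The analogue of Lemma \ref{lemma3} comes from coordinatewise $\Gamma$-convergence $J_{s_i^k,p_i,A,i}\to J_{1,p_i,A,i}$ (the Ponce/Alicandro–Ansini-style results extend to each direction separately, and the sum of $\Gamma$-liminf/limsup bounds is exactly the anisotropic $\Gamma$-limit since the functional splits as a sum). With these ingredients the analogues of Lemmas \ref{lemmaA2}, \ref{lemmaB2} and Proposition \ref{prop1} are transcribed with $L^p$ replaced by $L^{p_1}$ throughout, and the final argument combining $(H_1^\s)$ and $(H_4^\s)$ to sandwich $\liminf$ and $\limsup$ of the minima is identical to the end of the proof of Theorem \ref{teo2}. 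The main obstacle I expect is bookkeeping the parameter constraints of Definition \ref{s,p} uniformly in $k$ when $\s_k\nearrow\vec 1$, and in particular verifying that the anisotropic compact embedding and Poincaré constant do not degenerate; once those are controlled, everything else is a direct adaptation.
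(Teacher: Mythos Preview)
Your proposal is correct and follows essentially the same approach as the paper, which explicitly declines to give a separate proof and states that the anisotropic result is obtained by rerunning the arguments of Theorems \ref{teo1} and \ref{teo2} with the anisotropic ingredients (the anisotropic comparison principle, Poincar\'e inequality, BBM-type compactness from \cite{CERESABONDER23,CHAKER-23}, and the coordinatewise $\Gamma$-convergence) in place of their scalar counterparts. Your outline in fact spells out more of the details than the paper does.
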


As before, an example of a functional that satisfies the aforementioned hypotheses involves eigenvalues. In \cite{CERESABONDER24}, the existence of a sequence of eigenvalues related to the anisotropic eigenvalue of $(-\Delta_\p)^\s$ is proven in both the local and non-local cases.


\section*{Acknowledgements}
This work was supported by PIP No. 11220220100476CO. Ignacio Ceresa Dussel is a doctoral fellow of CONICET.
\bibliographystyle{plain}
\bibliography{References.bib}

\begin{thebibliography}{10}

\bibitem{ALLAIRE-02}
Gr\'{e}goire Allaire.
\newblock Shape optimization by the homogenization method.
\newblock 146:xvi+456, 2002.

\bibitem{BBM-01}
Jean Bourgain, Haim Brezis, and Petru Mironescu.
\newblock Another look at {S}obolev spaces.
\newblock pages 439--455, 2001.

\bibitem{BRASCO-PARINI-16B}
Lorenzo Brasco and Enea Parini.
\newblock The second eigenvalue of the fractional {$p$}-{L}aplacian.
\newblock {\em Adv. Calc. Var.}, 9(4):323--355, 2016.

\bibitem{BRASCO-PARINI-16}
Lorenzo Brasco, Enea Parini, and Marco Squassina.
\newblock Stability of variational eigenvalues for the fractional
  {$p$}-{L}aplacian.
\newblock {\em Discrete Contin. Dyn. Syst.}, 36(4):1813--1845, 2016.

\bibitem{BREZIS-LIBRO}
Haim Brezis.
\newblock {\em Functional analysis, Sobolev spaces and partial differential
  equations}.
\newblock Universitext. Springer, New York, 2011.

\bibitem{BUTTAZZO-BUCUR}
Dorin Bucur and Giuseppe Buttazzo.
\newblock {\em Variational methods in shape optimization problems}, volume~65
  of {\em Progress in Nonlinear Differential Equations and their Applications}.
\newblock Birkh\"{a}user Boston, Inc., Boston, MA, 2005.

\bibitem{BUTTAZZO-DALMASO-93}
Giuseppe Buttazzo and Gianni Dal~Maso.
\newblock An existence result for a class of shape optimization problems.
\newblock {\em Arch. Rational Mech. Anal.}, 122(2):183--195, 1993.

\bibitem{CERESABONDER24}
Ignacio Ceresa~Dussel and Juli\'{a}n Fern\'{a}ndez~Bonder.
\newblock Existence of eigenvalues for anisotropic and fractional anisotropic
  problems via ljusternik-schnirelman theory.

\bibitem{CERESABONDER23}
Ignacio Ceresa~Dussel and Juli\'{a}n Fern\'{a}ndez~Bonder.
\newblock A {B}ourgain-{B}rezis-{M}ironescu formula for anisotropic fractional
  {S}obolev spaces and applications to anisotropic fractional differential
  equations.
\newblock {\em J. Math. Anal. Appl.}, 519(2):Paper No. 126805, 25, 2023.

\bibitem{CHAKER-23}
Jamil Chaker, Minhyun Kim, and Marvin Weidner.
\newblock The concentration-compactness principle for the nonlocal anisotropic
  {$p$}-{L}aplacian of mixed order.
\newblock {\em Nonlinear Anal.}, 232:Paper No. 113254, 18, 2023.

\bibitem{DALMASO-93}
Gianni Dal~Maso.
\newblock {\em An introduction to {$\Gamma$}-convergence}, volume~8 of {\em
  Progress in Nonlinear Differential Equations and their Applications}.
\newblock Birkh\"{a}user Boston, Inc., Boston, MA, 1993.

\bibitem{BONDER-RITTORTO-18}
Juli\'{a}n Fern\'{a}ndez~Bonder, Antonella Ritorto, and Ariel~Martin Salort.
\newblock A class of shape optimization problems for some nonlocal operators.
\newblock {\em Adv. Calc. Var.}, 11(4):373--386, 2018.

\bibitem{BONDER-SALORT-20}
Julian Fern\'{a}ndez~Bonder and Ariel Salort.
\newblock Stability of solutions for nonlocal problems.
\newblock {\em Nonlinear Anal.}, 200:112080, 13, 2020.

\bibitem{FRANZINA-14}
Giovanni Franzina and Giampiero Palatucci.
\newblock Fractional {$p$}-eigenvalues.
\newblock {\em Riv. Math. Univ. Parma (N.S.)}, 5(2):373--386, 2014.

\bibitem{FUSCO-19}
Nicola Fusco, Shirsho Mukherjee, and Yi~Ru-Ya Zhang.
\newblock A variational characterisation of the second eigenvalue of the
  {$p$}-{L}aplacian on quasi open sets.
\newblock {\em Proc. Lond. Math. Soc. (3)}, 119(3):579--612, 2019.

\bibitem{LINDQVIST-14}
Erik Lindgren and Peter Lindqvist.
\newblock Fractional eigenvalues.
\newblock {\em Calc. Var. Partial Differential Equations}, 49(1-2):795--826,
  2014.

\bibitem{MOTREANU14}
Dumitru Motreanu, Viorica~Venera Motreanu, and Nikolaos Papageorgiou.
\newblock {\em Topological and variational methods with applications to
  nonlinear boundary value problems}.
\newblock Springer, New York, 2014.

\bibitem{PONCE-04}
Augusto~C. Ponce.
\newblock A new approach to {S}obolev spaces and connections to
  {$\Gamma$}-convergence.
\newblock {\em Calc. Var. Partial Differential Equations}, 19(3):229--255,
  2004.

\bibitem{SOKOLOWSKI-ZOLESIO}
Jan Soko\l~owski and Jean-Paul Zol\'{e}sio.
\newblock {\em Introduction to shape optimization}, volume~16 of {\em Springer
  Series in Computational Mathematics}.
\newblock Springer-Verlag, Berlin, 1992.
\newblock Shape sensitivity analysis.

\bibitem{WARMA-15}
Mahamadi Warma.
\newblock The fractional relative capacity and the fractional {L}aplacian with
  {N}eumann and {R}obin boundary conditions on open sets.
\newblock {\em Potential Anal.}, 42(2):499--547, 2015.

\end{thebibliography}

\end{document}